\newcommand{\p}{\partial}
\renewcommand{\phi}{\varphi}
\renewcommand{\a}{\alpha}
\newcommand{\eb}{\mathbf{e}}
\newcommand{\R}{{\mathbb R}}
\newcommand{\eps}{\varepsilon}
\newtheorem{remark}{Remark}
\title{Fokker-Planck Equations for Stochastic Dynamical Systems with Symmetric L\'evy Motions}
\author{T. Gao\footnotemark[1]
\and
J. Duan\footnotemark[2]\ \footnotemark[4]
\and
X. Li\footnotemark[3]}
\begin{document}

\maketitle

\renewcommand{\thefootnote}{\fnsymbol{footnote}}

\footnotetext[1]{Department of Applied Mathematics,
Illinois Institute of Technology, Chicago, IL 60616
({\tt tinggao0716@gmail.com}).}
\footnotetext[2]{Department of Applied Mathematics,
Illinois Institute of Technology, Chicago, IL 60616
({\tt duan@iit.edu}).}
\footnotetext[3]{Corresponding author. Department of Applied Mathematics,
Illinois Institute of Technology, Chicago, IL 60616
({\tt lix@iit.edu}).}
\footnotetext[4]{Institute for Pure and Applied Mathematics,
University of California, Los Angeles, CA 90095}

\renewcommand{\thefootnote}{\arabic{footnote}}

\begin{abstract}
 The Fokker-Planck equations for stochastic dynamical systems, with non-Gaussian $\alpha-$stable symmetric L\'evy motions,  have a nonlocal  or fractional Laplacian term.  This nonlocality is the manifestation of the effect of non-Gaussian fluctuations. Taking advantage of the Toeplitz matrix structure of the time-space discretization, a fast and accurate numerical algorithm is proposed to simulate the nonlocal Fokker-Planck equations, under either absorbing or natural conditions. The scheme is shown to satisfy a discrete maximum principle and to be convergent. It is validated against a known exact solution and the numerical solutions obtained by using other methods. The numerical results for two prototypical stochastic systems, the Ornstein-Uhlenbeck system and the double-well system are shown.  
\end{abstract}

\begin{keywords} Non-Gaussian noise;  $\alpha-$stable symmetric L\'evy motion; fractional Laplacian operator; Fokker-Planck equation; maximum principle, convergence, Toeplitz matrix.
\end{keywords}

\begin{AMS}
60H10, 60G17, 60G52, 65M12
\end{AMS}

\pagestyle{myheadings}
\thispagestyle{plain}
\markboth{GAO, DUAN AND LI }{NONLOCAL FOKKER-PLANCK EQUATIONS}

\section{Introduction}  \label{intro}

  The Fokker-Planck (FP) equations for stochastic dynamical systems
  \cite{Gardiner2009,Gardiner2004} describe the time evolution for the probability density of solution paths.
  When the noise in a system is Gaussian (i.e., in terms of Brownian motion), the corresponding Fokker-Planck equation is a differential equation. The Fokker-Planck equations are widely used to investigate stochastic dynamics in physical, chemical and biological systems.

  However,   the noise is often non-Gaussian \cite{Woy}, e.g., in terms of $\alpha-$stable L\'evy motions. The corresponding Fokker-Planck equation has an   integral term and is a differential-integral equation. In fact, this extra term  is an integral over the whole state space and thus represents a nonlocal effect caused by the non-Gaussianity of the noise.

  A Brownian motion is a Gaussian stochastic process, characterized by its mean or drift vector (taken to be zero for convenience), and a diffusion or covariance matrix.

A L\'evy process  $L_t$  on $\mathbb{R}^n$ is   a non-Gaussian
stochastic process. It is characterized by a drift vector $b
\in\mathbb{R}^n$ (taken to be zero for convenience), a Gaussian
covariance matrix $A$, and a non-negative Borel measure $\nu$,
defined on $(\R^n, \mathcal{B}(\R^n))$ and concentrated on $\R^n
\setminus\{0\}$. The measure $\nu$  satisfies the condition
\begin{equation*}
  \int_{\R^n \setminus\{0\} } (y^2 \wedge 1) \; \nu(dy) < \infty,
\end{equation*}
where $a \wedge b =\min\{a, b\}$, or equivalently
\begin{equation*}
  \int_{\R^n \setminus\{0\} } \frac{y^2}{1+y^2}\; \nu(dy) < \infty.
\end{equation*}
This measure $\nu$ is the so called    L\'evy jump measure of the
L\'evy process $L_t$. We also call $(b, A, \nu)$ the
\emph{generating triplet} \cite{Applebaum2009, Sato-99}.

In this paper, we   consider stochastic differential equations
(SDEs) with a special class of L\'evy processes, the $\alpha$-stable
symmetric L\'evy motions. The corresponding Fokker-Planck equations
(for the evolution of the probability density function of the
solution) contain a nonlocal term, i.e., the fractional Laplacian
term, which quantifies the non-Gaussian effect. It is hardly
possible to have analytical solutions for these nonlocal
Fokker-Planck equations even for  simple systems. We thus consider
numerical simulation for these nonlocal equations, either on a
bounded domain or a unbounded domain.  Due to the nonlocality,
however, the    `boundary' condition needs to be prescribed on the
entire exterior domain. A recently developed nonlocal vector calculus
has provided sufficient conditions for the well-posedness 
of the initial-boundary value problems on this type of nonlocal diffusion 
equations with volume constraints. See the review paper \cite{du2011analysis}. 

A few authors have considered numerical simulations of
reaction-diffusion type partial differential equations with a
(formal) fractional Laplacian operator. They impose a boundary
condition only on the boundary of a bounded set (not on the entire
exterior set as we do here);  see \cite{ervin2007numerical},
\cite[Eqs.~(10) and (11)]{li2012finite}, \cite[\S 3]{bueno2012fourier}, \cite{meerschaert2004finite},
\cite[Eq.~(2)]{yang2010numerical} and references therein. The domain of
definition of the  fractional Laplacian operator in these papers
consists of certain functions with prescribed values on the
boundary. However, the domain of definition of the  fractional
Laplacian operator in our present  paper consists of certain
functions with prescribed values on the entire exterior domain.
Thus,  the fractional Laplacian operator in the present paper is
different from that in \cite{ervin2007numerical, li2012finite, bueno2012fourier, meerschaert2004finite, yang2010numerical}, as
the domains of definition are different. Note that the exterior
boundary condition is required for understanding probability density
evolution. Recently, discontinuous and continuous Galerkin methods 
have been developed for the volume-constrained nonlocal diffusion 
problems and their error analysis is given in \cite{du2011analysis} and the
references therein. To our knowledge, the work \cite{Du13} is closely
related to our work, where the authors of \cite{Du13} have shown
the well-posedness, the maximum principle, conservation and dispersion
relations for a class of nonlocal convection-diffusion equations with
volume constraints. 
A finite difference scheme is also presented in \cite{Du13}, which
maintains the maximum principle when suitable conditions are met. 
Compared with this paper, the difference is that the jump processes 
considered in \cite{Du13} are nonsymmetric and of finite-range.

This paper is organized as follows. In section \ref{astable}, we
present the nonlocal Fokker-Planck equation for a SDE with
$\alpha-$stable symmetric L\'evy motion and then devise a numerical
discretization scheme, with either the absorbing or natural 
condition. For simplicity, we consider scalar SDEs. The description
and analysis of the numerical schemes are presented in section~\ref{sec.nm} 
and section~\ref{sec.na}, respectively. The numerical experiments 
are conducted in section~\ref{sec.nr}. The paper ends with some discussions
in section~\ref{sec.cl}.

\section{Fokker-Planck equations for SDEs with L\'evy motions}
\label{astable}

Consider a scalar SDE
\begin{equation} \label{sde999}
 {\rm d}X_t = f(X_t)\,{\rm d}t + {\rm d}L_t, X_0= x_0,
\end{equation}
where $f$ is a given deterministic vector field,  the scalar L\'evy
process $L_t$ has the generating triplet $(0, d, \eps \nu_\a)$, with
diffusion constant $d\geq 0$ and the $\alpha-$stable symmetric jump
measure
 $$
\nu_\a({\rm d}y)=C_\alpha|y|^{-(1+\alpha)}\, {\rm d}y.
$$
The constant $C_\alpha$ is defined as $\displaystyle{C_{\alpha} =
\frac{\alpha}{2^{1-\alpha}\sqrt{\pi}}
\frac{\Gamma(\frac{1+\alpha}{2})}{\Gamma(1-\frac{\alpha}{2})}}$. The
non-Gaussianity index $\alpha \in (0, 2)$ and the intensity constant
is $\eps$. For more information on $\alpha-$stable L\'evy motions,
see \cite{wu, Chen, Schertzer}.

The FP equation for the distribution of the conditional probability
density $p(x,t) = p(x, t|x_0, 0)$, i.e., the probability of the
process $X_t$ has value $x$ at time $t$ given it had value $x_0$ at
time $0$, is given by \cite{Applebaum2009, Schertzer, wu, wu2}
\begin{equation} \label{ffp3}
 p_t = - (f(x)\; p)_x + \frac12   d  p_{xx}
 + \eps [ k_{\a}  (-\Delta)^{\frac{\alpha}{2}}]^* p.
\end{equation}
Note that the integro-differential part $  k_{\a}
(-\Delta)^{\frac{\alpha}{2}}$ is  symmetric on $L^2(\R)$ (see
\cite{wu}).   Thus,
\begin{equation} \label{ffp4}
 \p_t p = -\p_x [f(x) \; p] + \frac{d}2      \p_{xx} p
 +  \eps k_{\a}  (-\Delta)^{\frac{\alpha}{2}} p,
\end{equation}
where
$$
k_{\alpha} \triangleq  \int_{\R \setminus\{0\}} (\cos y -1 ) \;
\nu_{\a}(dy) <0.
$$

This equation may be rewritten   as
\begin{eqnarray} \label{FPE555}
  p_t &=& - (f(x)\; p)_x + \frac12   d  p_{xx}   \nonumber \\
   &+& \int_{\R \setminus\{0\}} [p(x+  y, t)-p(x, t) -   I_{\{|y|<1\}} \; y \; p_x(x, t) ] \; \eps \nu_\a({\rm d}y) .
\end{eqnarray}

\subsection{Auxiliary Conditions}

We consider two kinds of auxiliary conditions, the absorbing nonlocal
"Dirichlet" condition and natural far-field condition.

The absorbing condition on the finite interval $(a, b)$ states that the
probability of finding ``particles" $X_t$ outside $(a, b)$ is zero:
\begin{equation}
 p(x, t)=0, \;\; \mbox{  for } x \notin (a, b).
\label{eq.abc}
\end{equation}


%
%


The natural far-field conditions in this work are specified as
follows: the domain is the whole real line $\R=(-\infty,\infty)$,
\begin{equation}
p(x,t)\rightarrow 0, \quad \text{ as } |x| \rightarrow \infty,
\label{eq.nbc}
\end{equation}
and the decay rate is fast enough to ensure
\begin{equation}
\int_{-\infty}^{\infty} p(x,t)\, {\rm d}x =1, \quad \text{for all }
t\ge 0. \label{eq.cm}
\end{equation}

\section{Numerical Methods}
\label{sec.nm}


For clarity and without losing generality, in the following, we
present the numerical algorithms the case of $(a,b)=(-1,1)$ with the
absorbing condition \eqref{eq.abc}. 

 Noting that the absorbing condition dictates that
$p$ vanish outside $(-1,1)$, we can simplify Eq.~(\ref{FPE555}) by
decomposing the integral into three parts
$\int_{\R}=\int_{-\infty}^{-1-x} + \int_{-1-x}^{1-x} +
\int_{1-x}^{\infty}$ and analytically evaluating the first and third
integrals
\begin{eqnarray}
 p_t &=&  -( f(x) p)_x + \frac{d}{2} p_{xx}
  - \frac{\eps C_\a}{\a} \left[\frac{1}{(1+x)^\a}+\frac{1}{(1-x)^\a}\right] p
\nonumber \\
 && + \eps C_\a \int_{-1-x}^{1-x} \frac{p(x+y,t) - p(x,t)}{|y|^{1+\a}}\; {\rm d}y,
\label{fpe1Dn3}
\end{eqnarray}
for $x \in (-1,1)$; and $p(x,t)=0$ for $x \notin  (-1, 1)$. We have
dropped the integral $\displaystyle{\int_\R \frac{I_{\{ |y|<1\}}(y)
\, y} {|y|^{1+\alpha}}\; {\rm d}y}$, since the principal value
integral vanishes and we find it does not improve the accuracy of
the numerical integration of the singular integral in
\eqref{FPE555} \cite{Ting12}. 

Next, we describe a numerical method for discretizing the
Fokker-Planck equation \eqref{fpe1Dn3}, a time-dependent
integro-differential equation. For the spatial derivatives, the
advection term $-(f(x) p)_x$ is discretized by the third-order WENO
method provided in \cite{jiang1996efficient} 
and \cite{jiang2000weighted}) while the
diffusion term is approximated by the second-order central
differencing scheme. The numerical scheme for the remaining nonlocal
integral term in \eqref{fpe1Dn3} is presented in detail as follows.

   Let's divide the interval $[-2,2]$ in space into $4J$ sub-intervals
and define $x_j=jh$ for $-2J\leq j \leq 2J$ integer, where $h=1/J$.
We denote the numerical solution of $p$ at $(x_j,t)$ by $P_j$. Then,
using a modified trapezoidal rule for approximating the singular
integral \cite{Ting12}, we obtain the semi-discrete equation
\begin{equation}
  \begin{split}
  \dfrac{{\rm d}P_j}{{\rm d}t} = & C_h \frac{P_{j-1} - 2P_j + P_{j+1}}{h^2}
   - [(fP)_{x,j}^+ +  (f P)_{x,j}^-] \\
   &  -  \frac{\eps C_\a}{\a} \left[\frac{1}{(1+x_j)^\a}+\frac{1}{(1-x_j)^\a}\right] P_j
    + \eps C_\a h \sum^{J-j}_{k=-J-j,k\neq 0}\!\!\!\!\!\!\!\!\!{''} \;
    {\frac{P_{j+k} - P_j}{|x_k|^{1+\alpha}} },
  \end{split}
 \label{nm1D3}
\end{equation}
for $0<\a<2$ and $j = -J+1, \cdots, -2,-1,0,1,2, \cdots, J-1$, where
the constant $C_h$ $\displaystyle{ = \frac{d}{2} - \eps C_\a
\zeta(\alpha-1) h^{2-\a}}$. $\zeta$ is the Riemann zeta function.
The $\pm$ superscripts denote the global
Lax-Friedrichs flux splitting defined as $(fP)^{\pm} =
\frac{1}{2}(fP \pm \alpha P)$ with $\alpha=\max_x |f(x)|$. The
summation symbol $\sum{''}$ means that the quantities corresponding
to the two end summation indices are multiplied by $1/2$. The
absorbing condition requires that the values of $P_j$ vanish if the
index $|j|\geq J$.

The first-order derivatives $(fP)_{x,j}^+$ and $(f P)_{x,j}^-$
are approximated by the third-order WENO scheme \cite{jiang2000weighted}. 
The approximation to $\varphi_x(x_j)$ on
left-biased stencil $\{ x_k, k=j-2, j-1, j, j+1\}$ is given by
$$ \varphi_{x,j}^+ =\frac{1}{2h}(\Delta^+ \varphi_{j-1} + \Delta^+ \varphi_{j} )
- \frac{\omega_-}{2h}(\Delta^+ \varphi_{j-2} - 2\Delta^+
\varphi_{j-1}  +\Delta^+ \varphi_{j})$$ where
$$ \omega_-= \frac{1}{1+2r_-^2}, \ \ \ \ r_- = \frac{\delta + (\Delta^-\Delta^+ \varphi_{j-1})^2}
{\delta + (\Delta^-\Delta^+ \varphi_{j})^2}.$$ 
Here $\delta=10^{-6}$ is used to prevent the denominators from being zero.
The approximation to
$\varphi_x(x_j)$ on right-biased stencil $\{ x_k, k=j-1, j, j+1, j+2\}$
is given by
$$\varphi_{x,j}^- =\frac{1}{2h}(\Delta^+ \varphi_{j-1} + \Delta^+ \varphi_{j} )
- \frac{\omega_+}{2h}(\Delta^+ \varphi_{j+1} - 2\Delta^+
\varphi_{j} +\Delta^+ \varphi_{j-1})$$ where
$$ \omega_+= \frac{1}{1+2r_+^2}, \ \ \ \ r_+ = \frac{\delta+ (\Delta^-\Delta^+ \varphi_{j+1})^2}
{\delta + (\Delta^-\Delta^+ \varphi_{j})^2}.$$ 
Note that $\Delta^+
\psi_j = \psi_{j+1} -\psi_{j}$, $\Delta^- \psi_j = \psi_{j}
-\psi_{j-1}$  and
$\varphi_{-J-1}^{\pm}=\varphi_{-J}^{\pm}=\varphi_{J}^{\pm}=\varphi_{J+1}^{\pm}=0.
$

For time evolution, we adopt a third-order total variation
diminishing(TVD) Runge-Kutta method given in \cite{shu1988efficient}. In
particular, for the ordinary differential equation(ODE) $\dfrac{{\rm
d}u}{{\rm d}t}= R(u)$, the method can be written as
\begin{equation}
\begin{split}
U^{(1)} &= U^n + \Delta t R(U^n),\\
U^{(2)} &= \frac{3}{4}U^n + \frac{1}{4} U^{(1)} +\frac{1}{4}\Delta t
R(U^{(1)}),\\
U^{n+1} &= \frac{1}{3} U^n + \frac{2}{3} U^{(2)} + \frac{2}{3}
\Delta t R(U^{(2)}),
\end{split}
\label{eq.rk3}
\end{equation}
where $U^n$ denotes the numerical solution of $u$ at time $t=t_n$.

The computational cost of the proposed numerical method 
is dominated by the summation term in the scheme \eqref{nm1D3}.
If the right-hand side of \eqref{nm1D3} is written in matrix-vector
multiplication form, the matrix corresponding to the summation term 
would be dense but is Toeplitz. The Toeplitz-vector products can be computed
fast with $O(J\log J)$ number of operation counts instead of $O(J^2)$,
where the matrix dimension is $O(J)$ \cite{golub2012matrix}.  
We find that the fast algorithm is crucial in solving the Fokker-Planck
equation for the natural far-field condition.

\begin{remark}

For the natural far-field condition \eqref{eq.nbc}, the semi-discrete
equation corresponding to \eqref{FPE555} becomes
\begin{equation}
  \begin{split}
  \dfrac{{\rm d}P_j}{{\rm d}t} = & C_h \frac{P_{j-1} - 2P_j + P_{j+1}}{h^2}
   - [(fP)_{x,j}^+ +  (f P)_{x,j}^-]
    + \eps C_\a h \sum^{J-j}_{k=-J-j,k\neq 0}\!\!\!\!\!\!\!\!\!{''} \;
    {\frac{P_{j+k} - P_j}{|x_k|^{1+\alpha}} },
  \end{split}
 \label{eq.sdn}
\end{equation}
where $J=L/h$ and $L\gg 1$. 
Unlike the case of the absorbing condition, 
the values of the density outside the computational domain $(-L,L)$
are unknown for the natural condition. 
We perform numerical
calculations on increasing larger intervals $(a,b)=(-L,L)$ until the
results are convergent.
\end{remark}



\section{Analysis of our numerical scheme}
\label{sec.na}

The numerical analysis for linear convection-diffusion equations
are well-known and have been described in many classic textbooks. 
In this section, we perform the analysis on the nonlocal term 
in \eqref{FPE555} due to the $\alpha$-stable process 
by requiring neither Gaussian diffusion 
nor the deterministic drift be present. i.e., $d=0$ and $f\equiv 0$. 
We now analyze the numerical schemes proposed in \S \ref{sec.nm},
by examining its discrete maximal principle,  stability and
convergence.

\subsection{Discrete Maximum Principle}\label{sec.mp}

First, we examine the discrete maximum principle of the scheme \eqref{nm1D3}
with the absorbing nonlocal Dirichlet condition \eqref{eq.abc}. Due to the positivity
of the probability density function and the vanishing condition \eqref{eq.abc}
outside the domain, the maximum principle states that the solution in the domain
lies within the interval $[0,M]$, where $M$ is the maximum value
of the initial probability function.

\begin{proposition} \label{prop1}
When both $d$ and $f$ vanish, 
the scheme \eqref{nm1D3} corresponding to the absorbing 
condition with forward Euler for time integration
satisfies the discrete maximum principle,
i.e., $0\leq P_j^0\leq M$ for all $j$'s implies $0\leq P_j^{n}\leq M$
for all $n>0$ and $j$'s,
if the following inequality holds
\begin{equation}
\frac{\Delta t}{h^{\a}} \leq \frac{1}{2 \eps C_{\a} \left[1+
\frac{1}{\a} - \zeta(\a -1) \right]}. 
\label{eq.cdm}
\end{equation} 
\end{proposition}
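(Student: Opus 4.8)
The plan is to write one forward-Euler step of the scheme as a linear update with nonnegative weights whose sum is at most $1$, which automatically keeps the values in $[0,M]$. With $d=0$ and $f\equiv0$, the time-stepped scheme \eqref{nm1D3} reads $P_j^{n+1}=P_j^n+\Delta t\,R_j^n$, where $R_j^n$ is the right-hand side of \eqref{nm1D3} evaluated at the $n$-th level; collecting the coefficient of each $P_i^n$ (and recalling $P_i^n=0$ whenever $|i|\ge J$, by the absorbing condition) we write $P_j^{n+1}=\sum_i a_{ji}P_i^n$. It then suffices to check three things: (i) $a_{ji}\ge0$ for all $i\ne j$; (ii) $a_{jj}\ge0$; and (iii) $\sum_i a_{ji}\le1$. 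Granting these, $0\le P_i^n\le M$ for all $i$ yields $0\le\sum_i a_{ji}P_i^n\le M\sum_i a_{ji}\le M$, that is $0\le P_j^{n+1}\le M$ (and $P_j^{n+1}=0$ trivially when $|j|\ge J$), so the statement follows by induction on $n$.

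Parts (i) and (iii) will be immediate. The off-diagonal coefficients in (i) come only from the central-difference term and the quadrature sum; since $C_\alpha>0$, $\eps>0$, and $C_h=-\eps C_\alpha\zeta(\alpha-1)h^{2-\alpha}>0$ — because $\zeta(\alpha-1)<0$ for $\alpha-1\in(-1,1)$ — each of them is $\ge0$, with no restriction on $\Delta t$. For (iii), the second difference $P_{j-1}-2P_j+P_{j+1}$ and the quadrature sum $\sum_k{}''(P_{j+k}-P_j)/|x_k|^{1+\alpha}$ are difference operators whose row coefficients sum to zero, so the only net contribution to $\sum_i a_{ji}$ beyond the $1$ coming from $P_j^n$ is the absorbing-boundary term $-\Delta t\,(\eps C_\alpha/\alpha)\big[(1+x_j)^{-\alpha}+(1-x_j)^{-\alpha}\big]<0$; hence $\sum_i a_{ji}<1$.

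The substance is (ii). Write $a_{jj}=1-\Delta t\big[(\mathrm{I})+(\mathrm{II})+(\mathrm{III})\big]$ with $(\mathrm{I})=2C_h/h^2=-2\eps C_\alpha\zeta(\alpha-1)h^{-\alpha}$, $(\mathrm{II})=(\eps C_\alpha/\alpha)\big[(1+x_j)^{-\alpha}+(1-x_j)^{-\alpha}\big]$, and $(\mathrm{III})=\eps C_\alpha h\sum_{k=-J-j,\,k\ne0}^{J-j}{}''|x_k|^{-(1+\alpha)}=\eps C_\alpha h^{-\alpha}\sum_{k=-J-j,\,k\ne0}^{J-j}{}''|k|^{-(1+\alpha)}$. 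I would bound $(\mathrm{I})+(\mathrm{II})+(\mathrm{III})$ from above, uniformly in $j$, by $2\eps C_\alpha h^{-\alpha}\big[1+\tfrac1\alpha-\zeta(\alpha-1)\big]$. The decisive step is to split the truncated sum in $(\mathrm{III})$ into its positive half and negative half, bounding them (the half-weighted terminal indices only shrink the sums, and $|j|\le J-1$ ensures $J\pm j\ge1$) by $\sum_{k=1}^{J-j}k^{-(1+\alpha)}$ and $\sum_{k=1}^{J+j}k^{-(1+\alpha)}$ respectively, and then invoking the elementary estimate $\sum_{k=1}^{N}k^{-(1+\alpha)}\le1+\int_1^N t^{-(1+\alpha)}\,dt=1+\tfrac1\alpha-\tfrac1{\alpha N^{\alpha}}$. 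Since $J-j=(1-x_j)/h$ and $J+j=(1+x_j)/h$, the subtracted tails equal exactly $(h^{\alpha}/\alpha)(1-x_j)^{-\alpha}$ and $(h^{\alpha}/\alpha)(1+x_j)^{-\alpha}$, so $(\mathrm{III})\le\eps C_\alpha h^{-\alpha}\big(2+\tfrac2\alpha\big)-(\mathrm{II})$; the absorbing-boundary term cancels exactly, leaving $(\mathrm{II})+(\mathrm{III})\le\eps C_\alpha h^{-\alpha}(2+\tfrac2\alpha)$, and adding $(\mathrm{I})$ gives the claimed bound. Hence $a_{jj}\ge1-2\,\tfrac{\Delta t}{h^{\alpha}}\,\eps C_\alpha\big[1+\tfrac1\alpha-\zeta(\alpha-1)\big]$, which is $\ge0$ precisely under the condition $\Delta t/h^{\alpha}\le\big(2\eps C_\alpha[1+\tfrac1\alpha-\zeta(\alpha-1)]\big)^{-1}$, which is \eqref{eq.cdm}.

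The obstacle I expect to matter is that $(\mathrm{II})$ and $(\mathrm{III})$ are large at different grid points: $(\mathrm{II})$ blows up like $h^{-\alpha}$ as $x_j\to\pm1$, whereas $(\mathrm{III})$ is largest near $x_j=0$, so estimating the two separately — say $(\mathrm{III})\le2\eps C_\alpha h^{-\alpha}\zeta(1+\alpha)$ together with $(\mathrm{II})\le2\eps C_\alpha h^{-\alpha}/\alpha$ — overshoots the constant in \eqref{eq.cdm}. The point is to recognize that the ``tail'' dropped when the singular sum is truncated to the grid is exactly the exterior integral that was evaluated analytically to produce $(\mathrm{II})$, so the two pieces cancel; once that is seen, the rest is routine bookkeeping (the integral-comparison inequality, the identities $J\mp j=(1\mp x_j)/h$, and handling the half-weighted endpoints).
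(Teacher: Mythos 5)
Your proof is correct and takes essentially the same route as the paper's: you verify nonnegativity of the off-diagonal weights via $\zeta(\alpha-1)<0$ and then establish nonnegativity of the diagonal weight through the same key estimate, namely that the truncated quadrature sum and the exterior-integral (absorbing-boundary) term jointly admit the uniform bound $2\eps C_\alpha h^{-\alpha}(1+\tfrac1\alpha)$. Your integral-comparison with the exactly cancelling tails is just a slightly different bookkeeping of the paper's estimate \eqref{eq.est}, and your ``row sums $\le 1$'' packaging matches the paper's final inequality, so there is nothing to add.
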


\begin{proof}
We only need to show $0\leq P_j^{n+1}\leq M$ given $0\leq P_j^{n}\leq M$.
Applying the explicit Euler to the semi-discrete scheme \eqref{nm1D3},
we have
\begin{equation}
  \begin{split}
 P_j^{n+1}=&P_j^{n} - \Delta t \eps C_\a \zeta(\alpha-1) h^{-\a}(P_{j-1}^n -2 P_j^n +P_{j+1}^n) \\
 & -\frac{\Delta t\eps C_\a}{\a} 
    \left[\frac{1}{(1+x_j)^\a}+\frac{1}{(1-x_j)^\a}\right] P_j^n 
   +  \Delta t \eps C_\a h \sum^{k=J-j}_{k=-J-j,k\neq
0}\!\!\!\!{''} \;
    {\frac{P_{j+k}^n -P_j^n }{|x_k|^{1+\a}} }  \\
   = &  \left\{1+ \frac{2\Delta t \eps C_\a \zeta(\alpha-1)}{ h^{\a}}
     -\frac{\Delta t\eps C_\a}{\a}
     \left[\frac{1}{(1+x_j)^\a}+\frac{1}{(1-x_j)^\a}\right]\right. \\
   & -\Delta t \eps C_\a h 
     \left. \sum^{k=J-j}_{k=-J-j,k\neq 0}\!\!\!\!{''} \;
    {\frac{1}{|x_k|^{1+\a}} }\right\} P_j^n
    -\frac{\Delta t \eps C_\a \zeta(\alpha-1)}{ h^{\a}} 
            \left(P_{j-1}^n+P_{j+1}^n\right)  \\
   & + \Delta t \eps C_\a h \sum^{k=J-j}_{k=-J-j,k\neq
0}\!\!\!\!{''} \;
    {\frac{P_{j+k}^n}{|x_k|^{1+\a}} },
  \end{split}
\label{eq.fe}
\end{equation}
where $J=1/h$.
Noting $\zeta(x)$ is negative for $-1<x<1$, the coefficients of non-diagonal
terms $P_m^n$ with $m\neq j$ are all positive. 
Thus,
\begin{equation}
  \begin{split}
    P_j^{n+1} \leq & 
      \left\{1+ \frac{2\Delta t \eps C_\a \zeta(\alpha-1)}{ h^{\a}}
     -\frac{\Delta t\eps C_\a}{\a}
     \left[\frac{1}{(1+x_j)^\a}+\frac{1}{(1-x_j)^\a}\right]\right. \\
    & -\Delta t \eps C_\a h 
    \left. \sum^{k=J-j}_{k=-J-j,k\neq
0}\!\!\!\!{''} \;
    {\frac{1}{|x_k|^{1+\a}} }\right\}M \\
   & +\left[-2\frac{\Delta t \eps C_\a \zeta(\alpha-1)}{ h^{\a}} + \Delta t \eps C_\a h \sum^{k=J-j}_{k=-J-j,k\neq
0}\!\!\!\!{''} \;
    {\frac{1}{|x_k|^{1+\a}} }\right]M \leq M,
  \end{split}
\end{equation}
provided that 
\begin{equation} 
 1+ \frac{2\Delta t \eps C_\a \zeta(\alpha-1)}{ h^{\a}}
     -\frac{\Delta t \eps C_\a}{\a}
     \left[\frac{1}{(1+x_j)^\a}+\frac{1}{(1-x_j)^\a}\right]
    -\Delta t \eps C_\a h \sum^{k=J-j}_{k=-J-j,k\neq
0}\!\!\!\!{''} \;
    {\frac{1}{|x_k|^{1+\a}} }\geq 0.
\label{eq.cdm1}
\end{equation}

   Similarly, when the condition \eqref{eq.cdm1} is met,
we can also prove that $P_j^{n+1} \geq 0$ given $P_j^n \geq 0$ 
for all $j$, because all coefficients for the elements of $P^n$
in \eqref{eq.fe} are non-negative.

  Next, we derive a sufficient condition \eqref{eq.cdm} that guarantees 
the inequality \eqref{eq.cdm1}.
We have the following estimate 
\begin{equation}
\begin{split}
& \Delta t \eps C_\a h \sum^{k=J-j}_{k=-J-j,k\neq 0}\!\!\!\!{''} \;
    {\frac{1}{|x_k|^{1+\a}} } +\frac{\Delta t \eps C_\a}{\a}
     \left[\frac{1}{(1+x_j)^\a}+\frac{1}{(1-x_j)^\a}\right]\\
     &= \Delta t \eps C_{\a} \left[h \sum^{k=J-j}_{k=-J-j,k\neq
0}\!\!\!\!{''} \;
    {\frac{1}{|x_k|^{1+\a}} } + \int_{-\infty}^{-1-jh}\frac{1}{|y|^{1+\a}}dy +
    \int_{1-jh}^{\infty}\frac{1}{|y|^{1+\a}}dy \right]\\
    &\leq \Delta t \eps C_{\a}
    \left[\frac{2h}{h^{1+\a}} + \int_{-1-jh+\frac{h}{2}}^{-h}\frac{1}{|y|^{1+\a}}dy +
     \int_{h}^{1-jh-\frac{h}{2}}\frac{1}{|y|^{1+\a}}dy +
 \int_{-\infty}^{-1-jh}\frac{1}{|y|^{1+\a}}dy +
    \int_{1-jh}^{\infty}\frac{1}{|y|^{1+\a}}dy \right]\\
    & \leq  \Delta t \eps C_{\a} \left[ \frac{2}{h^{\a}} + 2\int_{h}^{\infty} \frac{1}{y^{1+\a}}dy \right]
 = \Delta t \eps C_{\a} \left[\frac{2}{h^{\a}} \left(1+ \frac{1}{\a}\right) \right]
\end{split}
\label{eq.est}     
\end{equation}
where $x_j=jh$.

From the estimate \eqref{eq.est}, the inequality \eqref{eq.cdm1} holds
if 
\begin{equation}
    \Delta t \eps C_{\a}\left[\frac{2}{h^{\a}}
   \left(1+ \frac{1}{\a}\right)
  - \frac{2\zeta(\a -1)}{h^{\a}}\right] 
   = \frac{\Delta t}{h^\a} 2 \eps C_{\a}\left[ 1+ \frac{1}{\a} - \zeta(\a
-1) \right]\leq 1,
\end{equation}
which is equivalent to the condition \eqref{eq.cdm}.
\end{proof}

Next, we turn to the scheme for the natural condition.
\begin{proposition}\label{mpn}
When both $d$ and $f$ vanish, 
the scheme \eqref{eq.sdn} corresponding to the natural 
condition with forward Euler for time integration
satisfies the discrete maximum principle,
i.e., $0\leq P_j^0\leq M$ for all $j$'s implies $0\leq P_j^{n}\leq M$
for all  $n>0$ and $j$'s,
if the inequality \eqref{eq.cdm} holds.
\end{proposition}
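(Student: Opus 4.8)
The plan is to follow the proof of Proposition~\ref{prop1} almost verbatim, exploiting that the natural-condition scheme \eqref{eq.sdn} is obtained from \eqref{nm1D3} by simply deleting the single term $-\frac{\eps C_\a}{\a}\left[(1+x_j)^{-\a}+(1-x_j)^{-\a}\right]P_j$. First I would apply forward Euler to \eqref{eq.sdn} with $d=0$ and $f\equiv 0$, substitute $C_h=-\eps C_\a\zeta(\alpha-1)h^{2-\a}$, and rewrite $P_j^{n+1}$ as a linear combination of the values $P_m^n$. Because $\zeta(\alpha-1)<0$ for $\alpha\in(0,2)$, every off-diagonal coefficient --- the central-difference weights $-\Delta t\,\eps C_\a\zeta(\alpha-1)h^{-\a}$ together with the trapezoidally weighted nonlocal weights $\Delta t\,\eps C_\a h\,|x_k|^{-(1+\alpha)}$ --- is already nonnegative, and since both the second difference and the nonlocal sum in \eqref{eq.sdn} annihilate constants, all the coefficients on the right-hand side sum to exactly $1$. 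Thus the whole question reduces to the nonnegativity of the single diagonal coefficient: once that holds, $P_j^{n+1}$ is a genuine convex combination of $\{P_m^n\}$, whence $0\le P_m^n\le M$ for every $m$ yields $0\le P_j^{n+1}\le M$, and induction on $n$ finishes the proof. (One should also record the convention that the grid values at and beyond $|j|=J$ are held fixed at data lying in $[0,M]$, e.g.\ $0$, so that the stencils of the near-boundary interior nodes stay under control.)

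It remains to verify that \eqref{eq.cdm} makes the diagonal coefficient $1+2\Delta t\,\eps C_\a\zeta(\alpha-1)h^{-\a}-\Delta t\,\eps C_\a h\sum^{J-j}_{k=-J-j,\,k\neq 0}{''}\,|x_k|^{-(1+\alpha)}$ nonnegative. This requires bounding the nonlocal sum by itself, in the spirit of \eqref{eq.est} but simpler, since here there is no exterior tail integral to be absorbed. I would peel off the $k=\pm1$ terms of $h\sum^{J-j}_{k=-J-j,\,k\neq 0}{''}\,|x_k|^{-(1+\alpha)}$, which contribute exactly $2/h^{\a}$, and dominate the remaining terms $|k|\ge 2$ by $2\int_{h}^{\infty}y^{-(1+\alpha)}\,{\rm d}y=2/(\alpha h^{\a})$ via a midpoint/trapezoidal comparison with the monotone, convex (away from the origin) integrand, where enlarging the range of integration to all of $(h,\infty)$ only weakens the bound. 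This gives $\Delta t\,\eps C_\a h\sum^{J-j}_{k=-J-j,\,k\neq 0}{''}\,|x_k|^{-(1+\alpha)}\le\Delta t\,\eps C_\a\,\frac{2}{h^{\a}}\left(1+\frac1\a\right)$, and substituting it into the diagonal coefficient reduces the requirement to $1-\frac{2\Delta t\,\eps C_\a}{h^{\a}}\left[1+\frac1\a-\zeta(\alpha-1)\right]\ge 0$, which is exactly \eqref{eq.cdm}.

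I do not anticipate a genuine obstacle here. Structurally the statement is weaker than Proposition~\ref{prop1}: the term removed from \eqref{nm1D3} is a nonnegative multiple of $-P_j$, so keeping it would only shrink the diagonal coefficient, and hence every time step admissible for the absorbing scheme is a fortiori admissible for the natural one. The only point worth a sentence of care is the bookkeeping at the truncation boundary $|j|=J$ --- stating explicitly that the exterior values remain in $[0,M]$ so that the convex-combination bound applies to every interior node whose stencil reaches $x=\pm L$ --- after which the induction runs exactly as in the absorbing case.
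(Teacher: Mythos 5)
Your proposal is correct and follows essentially the same route as the paper: apply forward Euler to \eqref{eq.sdn}, observe that the only difference from the absorbing case is the absence of the term $\frac{\Delta t\,\eps C_\a}{\a}\left[(1+x_j)^{-\a}+(1-x_j)^{-\a}\right]P_j^n$, reduce the maximum principle to the nonnegativity of the diagonal coefficient, and bound the nonlocal sum by $\frac{2}{h^\a}\left(1+\frac1\a\right)$ by peeling off the $k=\pm1$ terms and comparing the rest with $2\int_h^\infty y^{-(1+\a)}\,{\rm d}y$, exactly as in the paper's estimate \eqref{eq.est2}. Your additional remark that the coefficients sum exactly to one (so that $P_j^{n+1}$ is a genuine convex combination) is a slightly cleaner way to get both bounds simultaneously, but it is the same argument in substance.
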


\begin{proof}
With the explicit Euler method for time integration, Eq.~\eqref{eq.sdn}
becomes 
\begin{equation}
 P_j^{n+1}=P_j^{n} - \Delta t \eps C_\a \zeta(\alpha-1) h^{-\a}(P_{j-1}^n -2 P_j^n +P_{j+1}^n)
+  \Delta t \eps C_\a h \sum^{k=J-j}_{k=-J-j,k\neq 0}\!\!\!\!{''} \;
    {\frac{P_{j+k}^n -P_j^n }{|x_k|^{1+\a}} },
\label{eq.fen}
\end{equation}
where $J=L/h$ and $L\gg 1$. The difference between the discrete
equations corresponding to the absorbing condition and 
natural condition, \eqref{eq.fe} and \eqref{eq.fen}, 
is that  \eqref{eq.fen} does not have the term 
$\displaystyle \frac{\Delta t\eps C_\a}{\a}
\left[\frac{1}{(1+x_j)^\a}+\frac{1}{(1-x_j)^\a}\right]
 P_j^n$. 

Similar to the proof of Proposition~\ref{prop1}, \eqref{eq.fen}
satisfies discrete maximum principle provided that
\begin{equation} 
   1+ 2\frac{\Delta t \eps C_\a \zeta(\alpha-1)}{
h^{\a}}
    -\Delta t \eps C_\a h \sum^{k=J-j}_{k=-J-j,k\neq
0}\!\!\!\!{''} \;
    {\frac{1}{|x_k|^{1+\a}} }\geq 0.
\label{eq.cdm2}
\end{equation}
And, a similar estimate 
\begin{equation}
\begin{split}
&\Delta t \eps C_\a h \sum^{k=J-j}_{k=-J-j,k\neq 0}\!\!\!\!{''} \;
    {\frac{1}{|x_k|^{1+\a}} } 
    \leq \Delta t \eps C_{\a}
    \left[\frac{2h}{h^{1+\a}} + \int_{-L-jh+\frac{h}{2}}^{-h}\frac{1}{|y|^{1+\a}}dy +
     \int_{h}^{L-jh-\frac{h}{2}}\frac{1}{|y|^{1+\a}}dy
  \right]\\
     \leq & \Delta t \eps C_{\a} \left[ \frac{2}{h^{\a}} + 2\int_{h}^{\infty} \frac{1}{y^{1+\a}}dy \right]
 = \Delta t \eps C_{\a} \left[\frac{2}{h^{\a}} \left(1+ \frac{1}{\a}\right) \right]
\end{split}
\label{eq.est2}
\end{equation}
leads to the same condition \eqref{eq.cdm} for maximum principle. 
\end{proof}

Now, we are ready to show that both semi-discrete schemes \eqref{nm1D3} 
and \eqref{eq.sdn} with the third-order TVD Runge-Kutta 
time discretization \eqref{eq.rk3} satisfy 
the discrete maximum principle. 
\begin{proposition}\label{mprk}
When both $d$ and $f$ vanish, 
the semi-discrete schemes \eqref{nm1D3} and \eqref{eq.sdn}, 
corresponding to the absorbing and the natural 
conditions respectively, with the third-order Runge-Kutta 
time discretization \eqref{eq.rk3} satisfy the discrete maximum principle,
i.e., $0\leq P_j^0\leq M$ for all $j$'s implies $0\leq P_j^{n}\leq M$
for all  $n>0$ and $j$'s,
if the inequality \eqref{eq.cdm} holds.
\end{proposition}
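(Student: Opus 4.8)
The plan is to exploit the standard strong-stability-preserving (SSP) structure of the Shu--Osher third-order TVD Runge--Kutta method~\eqref{eq.rk3}: each of its stages is a convex combination of forward Euler steps taken with the \emph{same} step size $\Delta t$. Since Propositions~\ref{prop1} and \ref{mpn} already show that one forward Euler step preserves the box $S_M := \{(V_j) : 0\le V_j\le M \text{ for all } j\}$ whenever~\eqref{eq.cdm} holds, and since $S_M$ is convex, every internal stage of~\eqref{eq.rk3}, and in particular $U^{n+1}$, will stay in $S_M$. This covers the absorbing case~\eqref{nm1D3} and the natural case~\eqref{eq.sdn} simultaneously, because the two single-step results are both available.

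Concretely, write $\mathcal{E}(V):=V+\Delta t\,R(V)$ for the forward Euler operator associated with the right-hand side $R$ of~\eqref{nm1D3} (resp.~\eqref{eq.sdn}) in the case $d=0$, $f\equiv0$. First I would note that with $d=0$ the coefficient $C_h=-\eps C_\a\,\zeta(\a-1)\,h^{2-\a}$ is strictly positive, since $\zeta(\a-1)<0$ for $\a\in(0,2)$, so the sign bookkeeping in the proofs of Propositions~\ref{prop1} and \ref{mpn} applies verbatim and yields $\mathcal{E}(S_M)\subseteq S_M$ under~\eqref{eq.cdm}. Next I would rewrite the three stages of~\eqref{eq.rk3} in manifestly convex form,
\begin{equation*}
 U^{(1)}=\mathcal{E}(U^n),\qquad
 U^{(2)}=\tfrac34 U^n+\tfrac14\,\mathcal{E}(U^{(1)}),\qquad
 U^{n+1}=\tfrac13 U^n+\tfrac23\,\mathcal{E}(U^{(2)}),
\end{equation*}
using $U^{(1)}+\Delta t\,R(U^{(1)})=\mathcal{E}(U^{(1)})$ and $U^{(2)}+\Delta t\,R(U^{(2)})=\mathcal{E}(U^{(2)})$. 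Then an induction over the stages finishes the argument: assuming $U^n\in S_M$, one gets $U^{(1)}=\mathcal{E}(U^n)\in S_M$; hence $\mathcal{E}(U^{(1)})\in S_M$ and, by convexity of $S_M$, $U^{(2)}=\tfrac34 U^n+\tfrac14\mathcal{E}(U^{(1)})\in S_M$; hence $\mathcal{E}(U^{(2)})\in S_M$ and likewise $U^{n+1}=\tfrac13 U^n+\tfrac23\mathcal{E}(U^{(2)})\in S_M$, i.e. $0\le P_j^{n+1}\le M$ for all $j$. An outer induction on $n$ then gives the claim for all $n>0$.

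The only point requiring care — and the reason the hypothesis is precisely~\eqref{eq.cdm} rather than something stricter — is that in this Shu--Osher decomposition every embedded forward Euler step carries the full step size $\Delta t$ (the weights $\tfrac14$ and $\tfrac23$ multiply $\mathcal{E}(\cdot)$, in which $\Delta t R$ appears with coefficient $1$, not a larger multiple); equivalently, the SSP coefficient of this Runge--Kutta scheme is $1$, so no shrinkage of the admissible $\Delta t$ is needed. I expect checking this algebraic rewriting to be the main, though entirely routine, obstacle; the rest reduces to convexity of the box $S_M$ together with the single-step maximum principle already established.
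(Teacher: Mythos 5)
Your proposal is correct and follows essentially the same route as the paper: the authors also invoke the Shu--Osher convex-combination (SSP) structure of \eqref{eq.rk3}, citing Proposition~2.1 of \cite{shu1988efficient}, together with the single-step invariance of $\{0\le P_j\le M\}$ established in Propositions~\ref{prop1} and \ref{mpn}. Your version merely spells out the stage-by-stage algebra that the paper leaves to the reference.
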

\begin{proof}
Propositions \ref{prop1} and \ref{mpn} have shown that 
the semi-discrete schemes \eqref{nm1D3} and \eqref{eq.sdn}
with the forward Euler time discretization
satisfies the discrete maximum principle 
when the inequality \eqref{eq.cdm} holds.
Based on the proof of Proposition~2.1 in \cite{shu1988efficient}, 
the explicit Runge-Kutta method \eqref{eq.rk3} satisfies 
the maximum principle under the same restriction \eqref{eq.cdm},
because it can be written 
as a convex combination of the operations that satisfy
the maximum principle due to Propositions \ref{prop1} and \ref{mpn}. 
\end{proof}

\begin{figure}[h]
\begin{center}
\includegraphics*[width=12cm]{./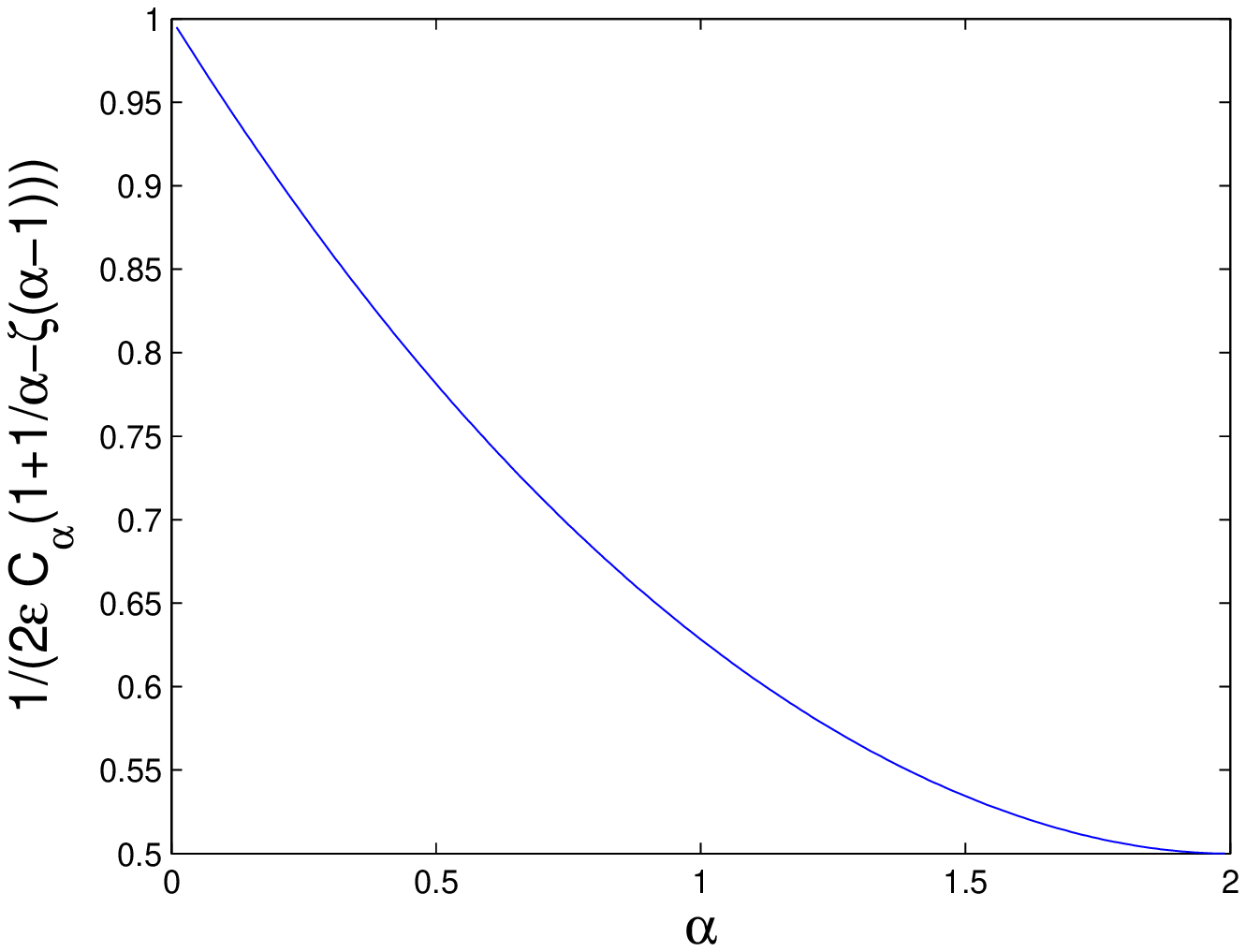}
\end{center}
\caption{The dependence of the threshold $\frac{1}{2 C_{\a} \left[1+
\frac{1}{\a} - \zeta(\a -1) \right]}$ on $\alpha$ with $\eps=1$.}
{\label{threshold}}
\end{figure}
\begin{remark}\label{sim}
It is interesting to note that the condition for the explicit method
corresponding to $\a$-stable 
process \eqref{eq.cdm} limits the ratio of the time step $\Delta t$
to the spatial grid size $h^\a$, while the corresponding condition 
for the Gaussian diffusion term $p_{xx}$ 
limits the ratio $\Delta t/h^2$. Further, the $\a$-stable process
is Gaussian when $\a=2$. The upper bound of the ratio for satisfying 
the maximum principle, i.e., the value of the constant on the right-hand
side of the inequality \eqref{eq.cdm}, is plotted for $0<\a<2$ 
in Fig.~\ref{threshold}. Surprisingly, the value of the threshold
decays from $1$ to $1/2$ as $\a$ increases from $0$ to $2$. 
To satisfy the maximum principle, we recall
the condition for the classic explicit scheme corresponding 
to the heat equation $p_t=\eps p_{xx}$, 
the forward Euler in time and the second-order central differencing in space,
is $\Delta t/h^2< 1/2$, matching the limit
of the inequality of \eqref{eq.cdm} as $\a$ approaches $2$. 
This agreement suggests that the condition \eqref{eq.cdm}
might be sharp for satisfying the maximum principle.
\end{remark}

\subsection{Stability and Convergence}
Due to the linearity of the schemes \eqref{eq.fe} and \eqref{eq.fen},
their stability can be deduced by means of the maximum principle shown
in the previous section \S~\ref{sec.mp}.
Therefore, the explicit schemes \eqref{eq.fe} and \eqref{eq.fen}
are stable when the ratio $\Delta t/h^\a$ satisfies
the condition \eqref{eq.cdm}.

Now we explore the convergence condition of the forward Euler 
scheme \eqref{eq.fen} for the equation $(\ref{FPE555})$ with 
the natural far-field condition. Although our proof
is for the case where the diffusion $d$ and 
the drift $f$ are absent, the extension to the cases in which both
$d$ and $f$ are present is straightforward. Also, one can easily extend
the following proof of 
convergence result to other different time integration schemes 
for the semi-discrete equations \eqref{nm1D3} and \eqref{eq.sdn}.

\begin{proposition}
 The solution $U_j^n$ to the numerical scheme \eqref{eq.fen} for the natural
far-field condition converges to the analytic solution to \eqref{FPE555}
for $x_j$ in $[-L/2,L/2]$ when the refinement path satisfies \eqref{eq.cdm}
and the length of the integration interval $2L$ in \eqref{eq.fen} 
tends to $\infty$. 
\end{proposition}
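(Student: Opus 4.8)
The plan is to prove convergence via the classical Lax-type argument: consistency plus stability implies convergence. The stability half is already in hand, since the preceding propositions give the discrete maximum principle under \eqref{eq.cdm}, and by linearity of \eqref{eq.fen} this yields an $L^\infty$ stability bound on the solution operator uniformly in $h$ and $\Delta t$ along any refinement path obeying \eqref{eq.cdm}. So the heart of the matter is local truncation error. I would introduce two sources of error separately: (i) the \emph{domain truncation error}, coming from replacing the integral $\int_{\mathbb R}$ in \eqref{FPE555} by $\int_{-L-x}^{L-x}$, i.e. discarding the tails $\int_{|y|>L-|x|}$; and (ii) the \emph{discretization error} of the modified trapezoidal rule applied to the (truncated) singular integral, together with the central-difference error hidden in the $C_h$ correction term. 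For (i), since the exact solution $p$ is a probability density with the fast far-field decay assumed in \eqref{eq.nbc}–\eqref{eq.cm}, and since $x_j\in[-L/2,L/2]$ so that $L-|x_j|\ge L/2$, the discarded tail is bounded by $\eps C_\a\int_{|y|>L/2}|y|^{-1-\a}\,dy\cdot\|p\|_\infty = O(L^{-\a})$, which $\to 0$ as $L\to\infty$. For (ii), I would invoke the error estimate for the modified trapezoidal (punctured) quadrature rule for $\int |y|^{-1-\a}\varphi(y)\,dy$ established in \cite{Ting12}: with the zeta-function correction the scheme is consistent of order $O(h^{2-\a})$ (or $O(h^2)$ away from the singularity), so that the semi-discrete truncation error at each interior node is $O(h^{2-\a})+O(L^{-\a})$.

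Concretely, I would define the error $e_j^n = U_j^n - p(x_j,t_n)$ and subtract the scheme \eqref{eq.fen} applied to $U$ from the same discrete operator applied to the exact grid values $p(x_j,t_n)$. Writing $\mathcal L_h$ for the spatial discrete operator on the right-hand side of \eqref{eq.fen}, I get
\begin{equation*}
 e_j^{n+1} = e_j^n + \Delta t\,(\mathcal L_h e^n)_j + \Delta t\,\tau_j^n,
\end{equation*}
where $\tau_j^n$ is the consistency error, the sum of the time-stepping error $O(\Delta t)$ from forward Euler, the quadrature error $O(h^{2-\a})$, and the domain-truncation error $O(L^{-\a})$. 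The key structural point is that the update map $v\mapsto v + \Delta t\,\mathcal L_h v$ is exactly the map whose nonnegativity-and-bound-preserving property was established in Propositions \ref{prop1} and \ref{mpn}: its matrix has nonnegative entries and row sums $\le 1$ under \eqref{eq.cdm}, hence it is a contraction in the $\ell^\infty$ norm. (Here one must be a little careful: the rows near the boundary have row sum $<1$, which only helps; for interior rows far from $\pm L$ the row sum is essentially $1$.) Therefore $\|e^{n+1}\|_\infty \le \|e^n\|_\infty + \Delta t\,\|\tau^n\|_\infty$, and iterating with $e^0=0$ gives $\|e^n\|_\infty \le t_n\,\max_m\|\tau^m\|_\infty = O(\Delta t) + O(h^{2-\a}) + O(L^{-\a})$, which tends to $0$ along the prescribed refinement path as $h,\Delta t\to 0$ and $L\to\infty$. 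This is the statement to be proved.

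The main obstacle — and the step I would spend the most care on — is making the consistency estimate for the singular-integral quadrature rigorous and uniform. One has to control the modified trapezoidal rule near the singularity $y=0$, where the integrand $|y|^{-1-\a}(p(x+y)-p(x))$ is itself singular and the $\zeta(\a-1)h^{2-\a}$ correction in $C_h$ is precisely what cancels the leading singular contribution; this requires smoothness of $p$ (say $p(\cdot,t)\in C^2$ or better, uniformly in $t$ on the relevant interval), together with a tail bound for large $|y|$ — and crucially these must be uniform over all grid points $x_j\in[-L/2,L/2]$ even as $L$ grows, which is why restricting to the half-interval $[-L/2,L/2]$ rather than all of $(-L,L)$ matters (it keeps every node a distance $\ge L/2$ from the artificial boundary, so the discarded tails and the boundary-layer effects of the finite domain stay uniformly small). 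A secondary subtlety is that $\mathcal L_h$ depends on $L$ (the summation range widens), so the contraction constant must be shown $\le 1$ uniformly in $L$; this follows from the estimates \eqref{eq.est2} in the proof of Proposition~\ref{mpn}, which are already $L$-independent after bounding the partial sums by the full integral $\int_h^\infty$. I would also remark, as the excerpt itself does, that upgrading forward Euler to the TVD Runge-Kutta \eqref{eq.rk3} or adding the $d\,p_{xx}$ and $f$ terms only changes the order of the time-error and adds standard $O(h^2)$ central-difference and WENO truncation terms, without affecting the structure of the argument.
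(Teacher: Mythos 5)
Your proposal is correct and follows essentially the same route as the paper: stability in $\ell^\infty$ via the nonnegative-coefficient/maximum-principle structure under \eqref{eq.cdm}, a truncation error split into the $O(\Delta t)$ time error, the quadrature error of the corrected trapezoidal rule, and the $O(L^{-\a})$ tail from truncating the domain, followed by the standard recursion $E^{n+1}\le E^n+\Delta t\,\bar T$. The only (harmless) discrepancy is your quoted quadrature order $O(h^{2-\a})$: the paper's expansion of the truncation error shows the leading spatial terms are $O(h^2)$ and $O(h^{4-\a})$, i.e.\ genuinely second order, but since $h^{2-\a}\to 0$ for $\a<2$ your weaker bound still yields the convergence claimed.
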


\begin{proof}
Denote the discretization error by $e_j^n:=P_j^n - p(x_j,t_n)$, 
where $p$ is the analytic solution. Defining the truncation error
$T_j^n$ as the difference between the left-hand and right-hand sides of
\eqref{eq.fen} evaluated at $(x_j,t_n)$ 
when the numerical solution $P_j^n$ is replaced
with the analytic solution $p(x_j,t_n)$ divided by $\Delta t$, 
the linearity of the equation leads to 
\begin{equation}
 e_j^{n+1}=e_j^{n} - \Delta t \eps C_\a \zeta(\alpha-1) h^{2-\a}\frac{e_{j-1}^n -2 e_j^n +e_{j+1}^n}{h^2} 
   +  \Delta t \eps C_\a h \sum^{k=J-j}_{k=-J-j,k\neq
0}\!\!\!\!\!{''} \;
    {\frac{e_{j+k}^n -e_j^n }{|x_k|^{1+\a}} } - \Delta t T_j^n, 
\label{eq.defen}
\end{equation}
where $J=L/h$.
The leading-order terms in the truncation error are given by 
\begin{equation}
\begin{split}
T_j^n = & \frac{1}{2} p_{tt}(x_j,t_n) \Delta t 
   + \frac{1}{12}\eps C_\a\zeta(\a-1) p_{xxxx}(x_j,t_n) h^{4-\a} 
   - C \eps \frac{\partial}{\partial y} 
       \left. \left( \frac{p(x_j+y,t_n)}{|y|^{1+\a}} \right) \right|_{y=-L-x_j}^{y=L-x_j}
    h^2 \\ 
 & - D \eps\zeta(\a-3) p_{xxxx}(x_j,t_n) h^{4-\a} 
   + \int_{\{-\infty,-L-x_j\} \cup \{L-x_j, \infty\} } 
       \frac{p(x_j+y,t)-p(x_j,t)}{|y|^{1+\a}} \,{\rm d}y 
   + \cdots,
\end{split}
\end{equation}
where $C$ and $D$ are constants.  Thus, the truncation error is uniformly bounded,
\begin{equation}
|T_j^n| \leq \bar{T} := O(\Delta t) + O(h^2) + O(L^{-\a}).
\end{equation}
Using the same argument as in the proof in Proposition~\ref{mpn}, 
under the condition \eqref{eq.cdm}, the coefficients of $e_m^n$ are nonnegative
for all $m$'s. Denoting $E^n= \max_{j} |e_j^n|$, we have
\begin{equation}
E^{n+1} \leq E^n + \Delta t \bar{T}, \quad \text{then} \quad
E^n \leq E^0 + n\Delta t \bar{T} = n\Delta t \bar{T}.
\end{equation}
Therefore, the convergence is proved.

\end{proof}

\begin{remark}
Because the derivatives of the density function $p$ might be infinite
at the boundaries of the domain $(-1,1)$ for the absorbing condition,
as shown in next section,
the truncation error of the modified trapezoidal rule is
not uniformly second-order. The numerical analysis of the convergence
for \eqref{eq.fe} will be studied in future. 
\end{remark}

%

\section{Numerical results}
\label{sec.nr}

%
%
%
%

\subsection{Verification}

\begin{figure}[h]
\begin{center}
\includegraphics*[width=7.8cm]{./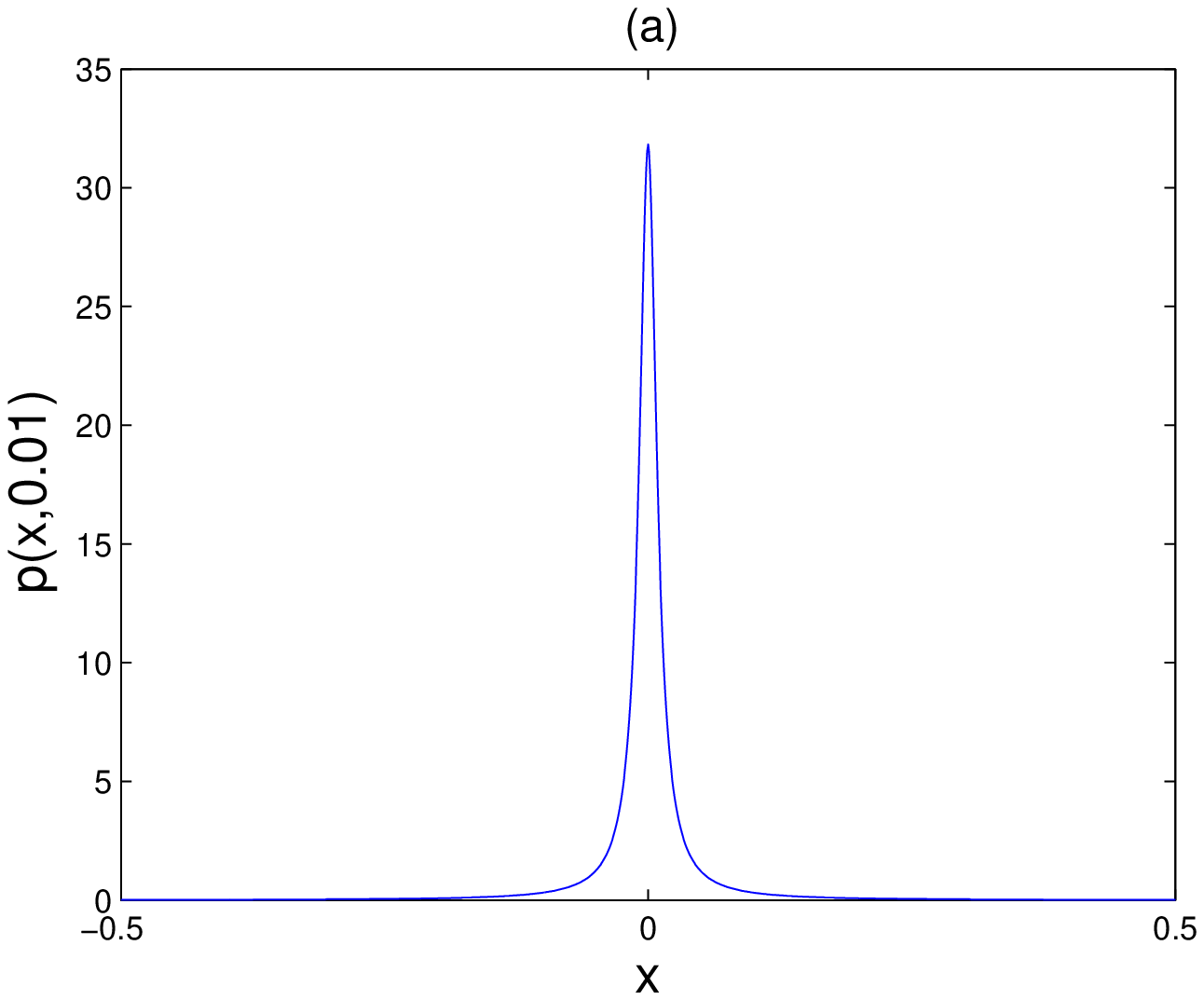}
\includegraphics*[width=7.8cm]{./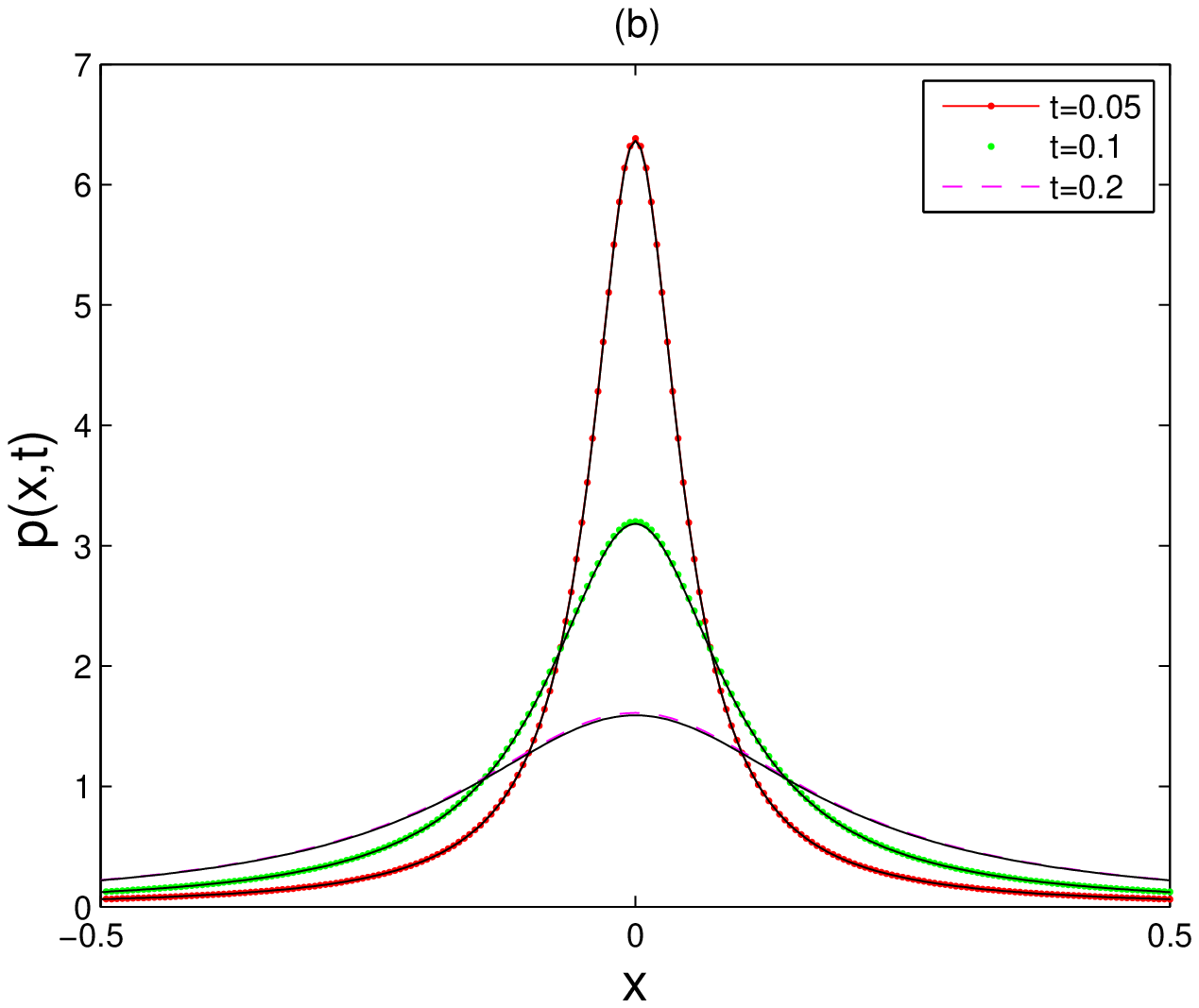}
\end{center}
\caption{The left plot is the initial condition
$p(x,0.01)=\frac{0.01}{\pi(0.01^2+x^2)}$. The right graph plots
both the numerical solution and the exact solution at times $t=0.05, 0.1, 0.2$,
where the numerical solutions are drawn by dotted or dashed lines 
and the exact solutions are drawn by solid lines.} 
\label{comp_cauchy}
\end{figure}
For some special cases, the analytic or exact solutions can be
obtained by inspecting the characteristic functions in the form of
Fourier Transform (\cite{imbert2005non}) or series expansion
(\cite{bergstrom1952some}). We first compare our numerical solution with the
exact solution $\displaystyle{p(x,t)=\frac{t}{\pi(t^2+x^2)}}$  for
the Cauchy case with the natural far-field condition, which
corresponds to the FP equation~\eqref{ffp4} with $d=0$, $f=0$,
$\varepsilon=1$, $\alpha=1$. To avoid numerical complication of a
delta function, we start our numerical computation from time
$t=0.01$ by setting the initial condition to be
$\displaystyle{p(x,0.01)=\frac{0.01}{\pi(0.01^2+x^2)}}$ and
approximating the natural far-field condition by truncating the
computational domain to $(-50,50)$ using the numerical scheme  
given in Eq.~(\ref{eq.sdn}).
Both the exact and the numerical solutions are shown in
Fig.~\ref{comp_cauchy} at the times $t=0.05, 0.01$ and $0.2$. In the
numerical simulations, we have chosen the spatial resolution
$h=0.001$ and the time step size $\Delta t= 0.5 h$. As the graphs
show, the numerical results are indistinguishable from the exact
solution, implying our numerical methods produce correct and
accurate results.

\begin{figure}[h]
\begin{center}
\includegraphics*[width=9.5cm]{./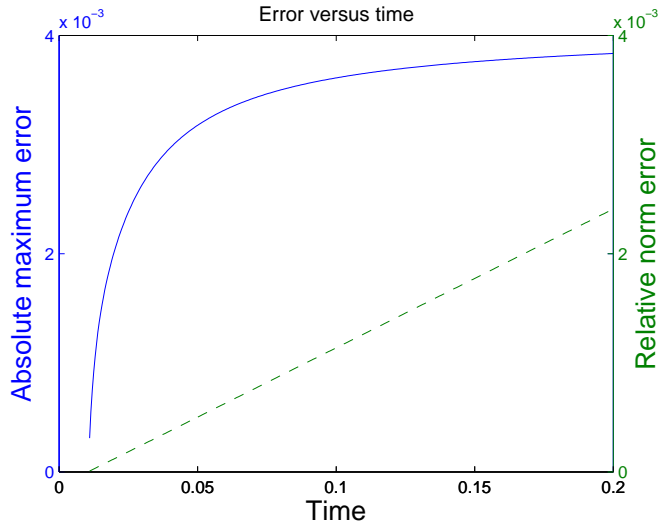}
\end{center}
\caption{The growth of the absolute error in maximum norm and the
relative error in 2-norm in time for the numerical solution With the
computational domain size $(-50,50)$, $h=0.001$ and $\Delta t=0.5h$.
}
{\label{error_time}}
\end{figure}
 We also examine the maximum absolute error and the relative error in 2-norm
of our numerical solution, defined by $\|\eb_n\|_{\infty} \equiv
\max_j |P^n_j - p(x_j,t_n)|$ and $\|\eb_n \|_2/\|(p(x_j,t_n))\|_2$
respectively. Here, the error vector $\eb_n$ is defined as $\eb_n :=
(P^n_j- p(x_j,t_n))$, the difference between the numerical solution
$(P^n_j)$ and the exact solution $(p(x_j,t_n))$ at the time $t_n$.
Figure~\ref{error_time} shows the sizes of the errors of our
numerical solutions obtained by using the computational domain
$(-50,50)$, $h=0.001$ and $\Delta t=0.5h$, as the time increases
from $0.01$ to $0.2$. The absolute error increases initially but
levels off quickly as the solution itself becomes small, while the
relative error grows linearly in time. Up to time $t=0.2$, the
relative error is small (less than 0.3\%).

%
%
%
%

Next, we check the order of convergence of our numerical methods in
space. In theory \cite{sidi88}, with the correction term $-\eps
C_{\alpha}\zeta(\alpha-1)h^{2-\alpha}$, the error between the
integro-differential term $\eps
k_\alpha(-\Delta)^{\frac{\alpha}{2}}$ in Eq.~\eqref{ffp4} and its
numerical approximation in Eq.~\eqref{nm1D3} is given by
\begin{equation}
   -\sum\limits_{\mu=1}^{m-1}C_{\mu} \left[
   \frac{\partial^{2\mu-1}}{\partial y^{2\mu-1}}
  \left(\left. \frac{p(x+y,t)}{|y|^{1+\a}}\right)\right|_{y=-L}^{y=L}
  \right] h^{2\mu}
 - \sum\limits_{\mu=2}^{m-1}D_{\mu} \zeta(\alpha+1-2\mu)
\frac{\partial^{2\mu} p}{\partial x^{2\mu}} (x,t) \,
h^{2\mu-\alpha}+O(h^{2m}),
\label{eq.oc}
\end{equation}
where $L\gg |x|$, and $C_{\mu}$ and $D_{\mu}$ are constants related
to $\mu$.

\begin{table}[h]
\begin{tabular}{ |c | c | c | c | c | c| c| c| }
\hline
$h$                &  $0.1$  &  $0.1/2$ &   $0.1/2^2 $  &  $0.1/2^3 $ & $0.1/2^4$ & $0.1/2^5$& $0.1/2^6$ \\
\hline Error & $1.01 $ &  $0.264$  &  $0.0554$  &
$4.23\times10^{-3}$ &
$8.41\times10^{-5}$ &  $4.13\times10^{-5}$  &  $3.93\times10^{-5}$\\
\hline Order & $1.94  $ & $ 2.25  $ & $ 3.71  $ & $5.65 $ &
$1.03$ &$0.0734$ &\\
 \hline
\end{tabular}
\caption{The error of the numerical solution of $p(x,t)$ at the
fixed point $(x,t)=(0.1,0.02)$ and the numerical order of
convergence for difference sizes of the resolution $h$. The
numerical solutions are obtained without Richardson extrapolation.}
\label{tab.1}
\end{table}
In Table~\ref{tab.1}, we provide the computed order of convergence
in space using different sizes of $h$ for the Cauchy case $\a=1$
with the natural far-field condition, no deterministic driving force
$f=0$ and no Gaussian diffusion $d=0$. The error in the numerical
solution is computed with the computational domain
$(-L,L)=(-100,100)$ at a fixed point $(x,t)=(0.1,0.02)$ by comparing
with the known exact solution $p(0.1,0.02)$ and, again, the initial
data is taken from the exact solution at time $t=0.01$. The
numerical order of convergence is calculated from the formula
$\log_2 |e(h)/e(h/2)|$, where $e(h)$ denotes the error at the point
$(x,t)=(0.1,0.02)$ with the spatial resolution $h$.

The results in Table~\ref{tab.1} shows that, for relatively large or
moderate sizes of $h$, the order of convergence is either close to
or better than two. Second-order convergence is expected from the
theory \eqref{eq.oc}. The higher orders of convergence shown in
Table~\ref{tab.1} can be explained as follows. First, for
$\alpha=1$, the second summation in the quadrature error
\eqref{eq.oc} vanishes due to $\zeta(-2)= \zeta(-4)= \cdots =0$.
Second, the coefficients in the terms of the first summation in
\eqref{eq.oc} are small because the unknown function $p$ and its
spatial derivatives vanish at the infinity and are small when $L$ is
large. However, the error in the numerical solution ceases to
decrease further when $h$ is small, since the size of the error is
comparable to the error caused by truncating the infinite domain
$(-\infty,\infty)$ of the original problem to the finite-sized
computational domain $(-L,L)$.

\begin{table}[h]
\begin{tabular}{ |c | c | c | c | c | c| c| c| }
\hline
$h$                 &  $0.1$  &  $0.1/2$ &   $0.1/2^2 $  &  $0.1/2^3 $ & $0.1/2^4$ & $0.1/2^5$& $0.1/2^6$ \\
\hline Error & $0.379 $ &  $0.0989$ & $ 0.0208 $ &
$1.58\times10^{-3}$ & $1.69\times10^{-5} $ & $8.78\times10^{-7} $  &
$ 1.10\times10^{-7}
$\\
\hline Order & $1.94$  &   $2.25$ &  $3.72$ & $6.54$
 &  $4.29$  &  $3.00 $&\\
 \hline
\end{tabular}
\caption{The error of the numerical solution of $p(x,t)$ at the
fixed point $(x,t)=(0.1,0.02)$ and the numerical order of
convergence for difference sizes of the resolution $h$. The
numerical solutions are obtained with Richardson extrapolation.}
\label{tab.2}
\end{table}
Next, we perform Richarsdon extrapolation with respect to $L$, half
of the size of the computational domain for computing the numerical
solution of $p(0.1,0.02)$, reducing this truncation error small
enough such that the total error is dominated by the quadrature
error \eqref{eq.oc}. The extrapolation is given by $\displaystyle
\frac{1}{3}P(L) - 2 P(2L) + \frac{8}{3} P(4L) = p(0.1,0.02) +
O(\frac{1}{L^3})$, where we denote the numerical solution of
$p(0.1,0.02)$ with the computational domain $(-L,L)$ by $P(L)$.
Table~\ref{tab.2} shows that the numerical results using the
Richardson extrapolation continue to improve as $h$ decreases and
the order of convergence is better than second-order.



\begin{figure}[h]
\begin{center}
\includegraphics*[width=12cm]{./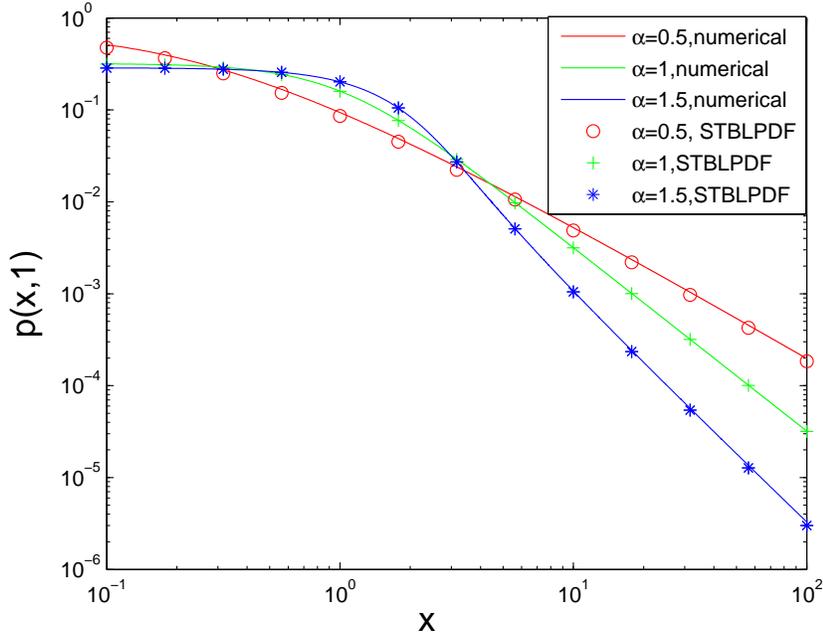}
\end{center}
\caption{Our numerical solutions (the solid lines) at $t=1$ with
$d=0,f=0,\varepsilon =1$ compared with the probability density
functions (the $o$'s, $+$'s, $*$'s symbols) of $\alpha-$stable
symmetric random variables. Our numerical results are computed with
the computational domain $(-110,110)$ with $h=0.001$ and $\Delta t=
0.5h^{\alpha}$, while the probability densities of the random
variables are obtained by the Matlab function
$\mathrm{stblpdf}.\mathrm{m}$ The Matlab package on $\alpha-$stable
distributions is downloaded from
$math.bu.edu/people/mveillet/html/alphastablepub.html$. They
simulated $\alpha-$stable random variables (\cite{chambers1976method}) and
numerically computed $\alpha-$stable distribution functions based on
numerical method in \cite{nolan1997numerical}.} 
\label{fig.comp_ana}
\end{figure}
As another verification, Fig.~\ref{fig.comp_ana} compares our
numerical solutions for different values of $\a=0.5, 1, 1.5$ at time
$t=1$ with the probability densities of the corresponding
$\alpha-$stable random variables. From the scaling property of
L\'evy process, we know that the solution to FP equation at time
$t=1$ is the same as the probability density of symmetric
$\alpha-$stable random variable. The heavy-tail property is easy to
see by looking at the power-law behavior for large $x$ in
Fig.~\ref{fig.comp_ana}. We start our computations from time
$t=0.01$ to avoid the singular behavior of the delta function, with
the initial profile provided by the exact solution to the Cauchy
case
$P(x,0.01)=\frac{t}{\pi(t^2+x^2)}=\frac{0.01}{\pi(0.01^2+x^2)}$.
Note that it induces a small error for the cases for which $\a \neq
1$. Nevertheless, our numerical solutions at time $t=1$ show that,
for large values of $x$, the slopes are equal to $-(1+\alpha)$,
implying the expected power-law behavior.


\subsection{Absorbing condition}

\begin{figure}
\begin{center}
\includegraphics*[width=11.8cm]{./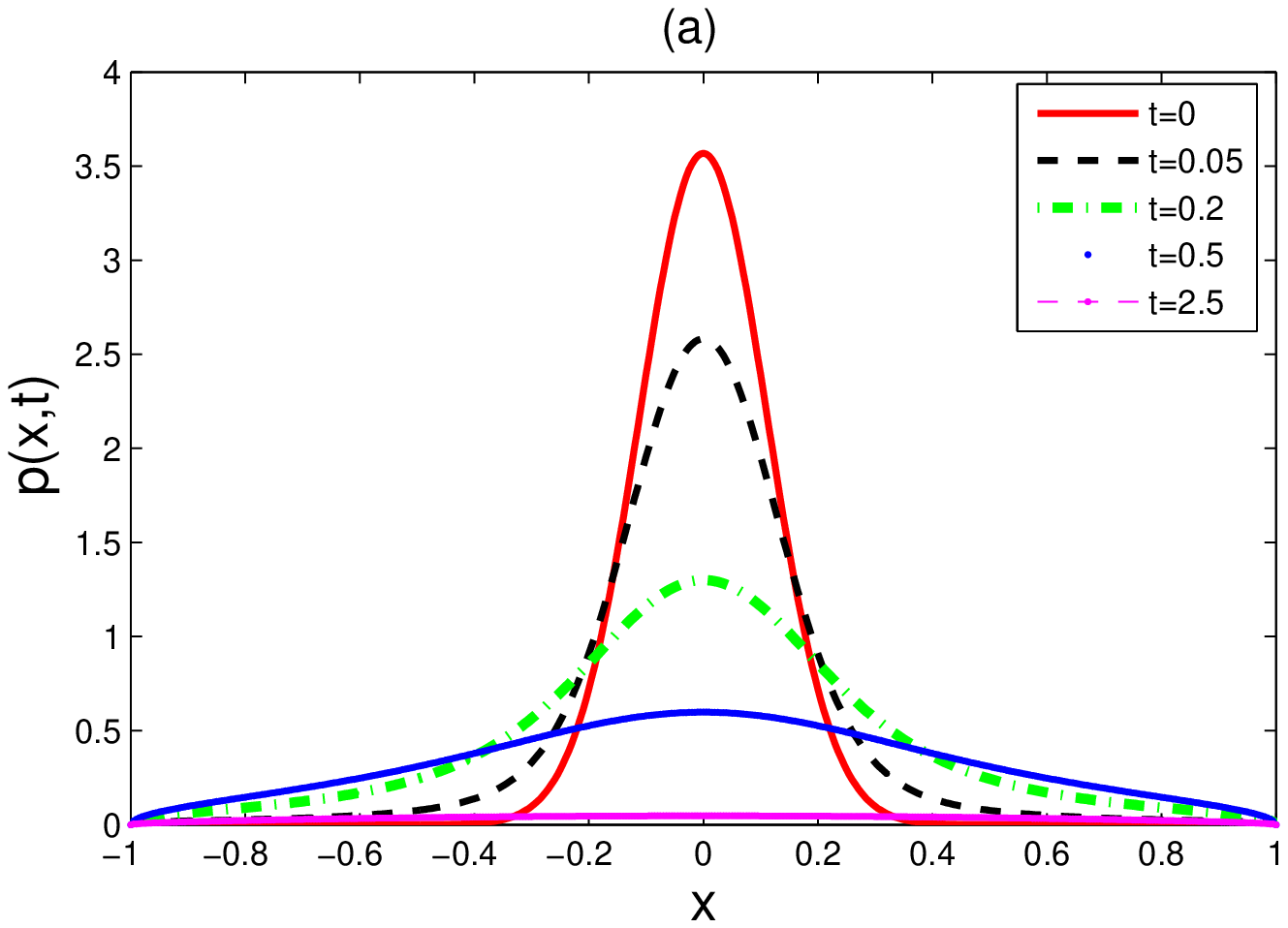}
\includegraphics*[width=11.8cm]{./}
\end{center}
\caption{The probability density function $p$ for the symmetric
$\a$-stable L\'evy process $X_t= {\rm d}L_t$ without any drift or
Gaussian diffusion ($d=0$) at different times $t=0, 0.05, 0.2, 0.5,
2.5$ with $f=0,\eps=1,\alpha=1$, $h=0.001$, $\Delta t=0.5h$ and the
absorbing condition $p(x,t)=0$ for $x \notin (-1,1)$. (a)
The initial data is a Gaussian distribution $p(x,0)=
\sqrt{\frac{40}{\pi}} e^{-\frac{x^2}{40}}$; (b) the initial profile
is the uniform distribution. } \label{time_sequence}
\end{figure}

In this section, we investigate the probability density $p(x,t)$
that a "particle" $X_t$ is located at the position $x$ at time $t$
given the probability profile of its initial position is Gaussian
($\displaystyle p(x,0)= \sqrt{\frac{40}{\pi}} e^{-\frac{x^2}{40}}$)
or uniform ($\displaystyle p(x,0)=\dfrac{1}{2} I_{\{|x|<1\}}$). The
absorbing condition implies the "particle" will vanish once
it is out of a specified domain. Figure~\ref{time_sequence} shows a
time sequence of the probability densities for the symmetric
$\a$-stable L\'evy process without any deterministic driving force
$f=0$ or any Gaussian diffusion $d=0$ but with $\a=1$ and $\eps=1$
at times $t=0, 0.05, 0.2, 0.5, 2.5$. As shown in
Fig.~\ref{time_sequence}(a), for an initial Gaussian distribution,
the density profiles keep the bell-type shape for small times while
the probabilities near the center decreases faster than those away
further from the center. For an initial uniform distribution,
Fig.~\ref{time_sequence}(b) demonstrates the density profile becomes
a smooth function after $t>0$, indicating the $\a$-stable process
has the effect of "diffusion". In this case, the maximum of the
density function stays at the center of the domain, because the
center is the furthest away from the boundary of the domain. In all
cases, the "particle" eventually escapes the bounded domain and
vanishes afterwards.

\subsubsection{Effect of different jump measures}

\begin{figure}
\begin{center}
\includegraphics*[width=7.9cm]{./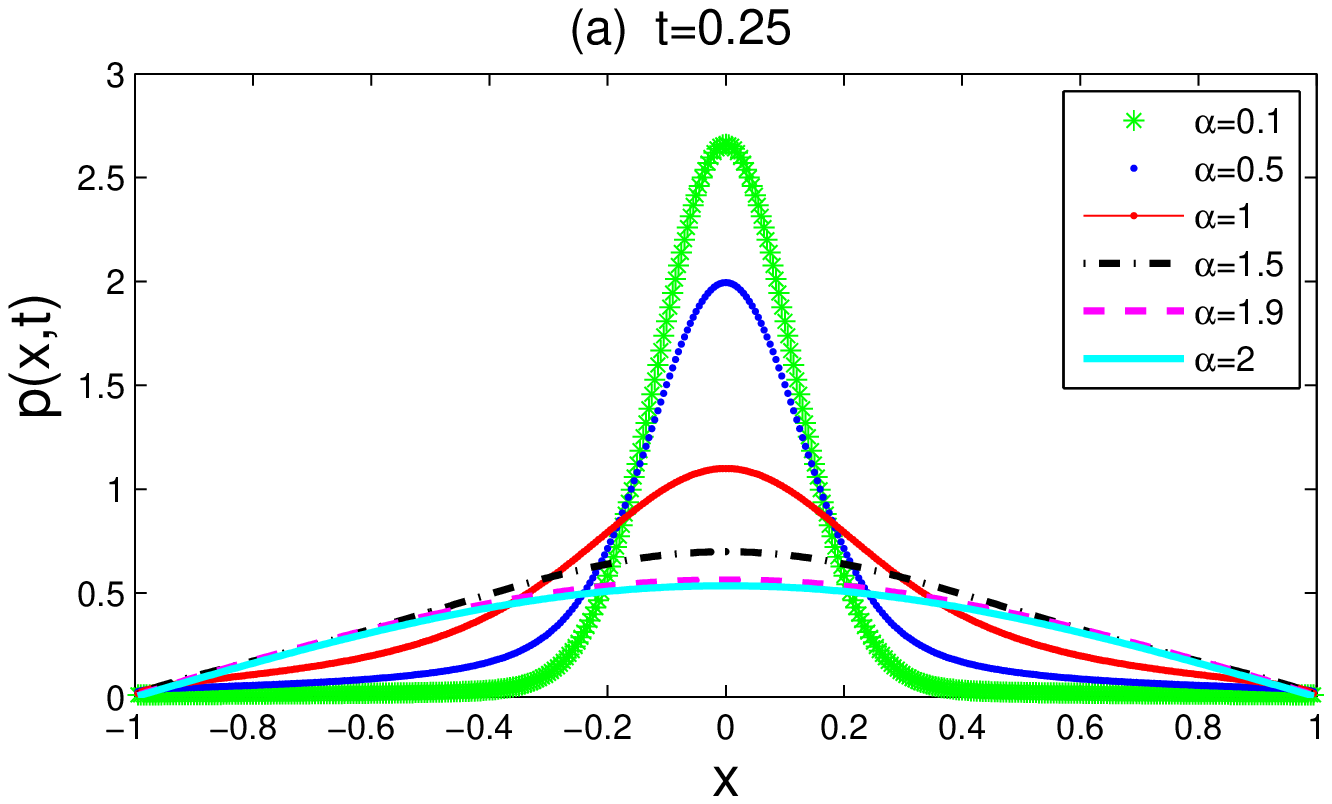}
\includegraphics*[width=7.9cm]{./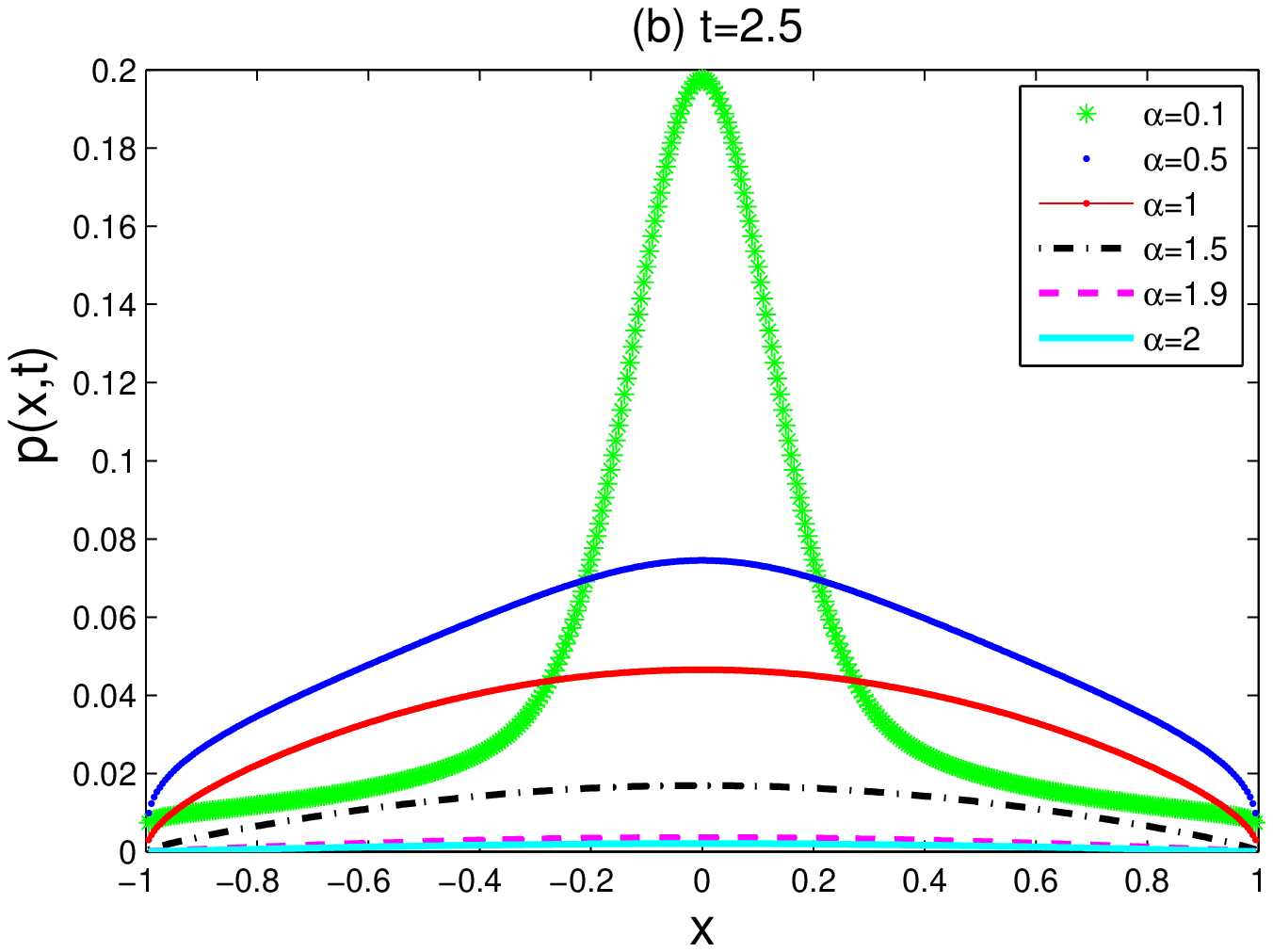}
\end{center}
\caption{The dependence of the probability density function $p$ on
the values of $\a$. The initial condition is the Gaussian profile
$\displaystyle p(x,0)= \sqrt{\frac{40}{\pi}} e^{-\frac{x^2}{40}}$
peaked at the origin. The probability densities $p$ at time (a)
$t=0.25$ and  (b) $t=2.5$,  for $d=0,f\equiv 0$, $\eps =1$,
$h=0.001$, $\Delta t=0.5 h^\a$ with the absorbing condition.
 }
\label{fig.dag}
\end{figure}
Next, we examine the dependence of the probability density $p$ on
the values of $\alpha$ (different L\'evy measures) for symmetric
$\a$-stable L\'evy motion ($d=0$ and $f\equiv 0$).
Figure~\ref{fig.dag} shows the density $p$ as a function of the
position $x$ for $\a=0.1, 0.5, 1, 1.5, 1.9, 2$ at two different
times $t=0.25$ and $t=2.5$, starting from the same Gaussian
distribution $\displaystyle p(x,0)= \sqrt{\frac{40}{\pi}}
e^{-\frac{x^2}{40}}$.

If the particle starts near the center of the domain (the origin
$x=0$ in our configuration), the numerical results in
Fig.~\ref{fig.dag} show that, for $x$ near the origin, the
probability $p$ is smaller as the value of $\a$ increases,
suggesting that the particle is less likely to stay near the origin
when the value $\a$ is larger. At earlier times, for example
$t=0.25$, the probability of finding the particle at positions $x$
away from the origin is larger when $\a$ is larger (except for the
special Gaussian noise case $\a=2$). Note that the probability
density for the Gaussian diffusion ($\a=2$) is much smaller than
those for the symmetric $\a$-stable L\'evy motion with $\a<2$. The
profiles of the density function change to parabola-like shape,
having negative second-order derivative, for large values of $\a$
($\a \ge 1.5$ at time $t=0.25$ in our case),
 while the profiles stay bell-type shape for smaller values of $\a$.
At later times, for example $t=2.5$, the probability near the
boundary of the domain becomes a decreasing function of $\a$ for the
values of $0.5\leq \a \leq 2$, and the profiles of the density
functions become all parabola-like except for the smallest value of
$\a=0.1$. It is interesting to point out that the probability $p$
appears to be discontinuous at the boundary $x=\pm 1$ for $\a=0.1$
and $0.5$. Examining the solutions more closely near the boundaries,
we find that the probability densities are actually continuous but
have very sharp transition (a thin "boundary layer") at the
boundaries. However, in our earlier work \cite{Ting12}, the
numerical results show that the mean first exit time of symmetric
$\a$-stable L\'evy motion is discontinuous at the boundary of a
bounded domain for $\a<1$.

\begin{figure}
\begin{center}
\includegraphics*[width=7.9cm]{./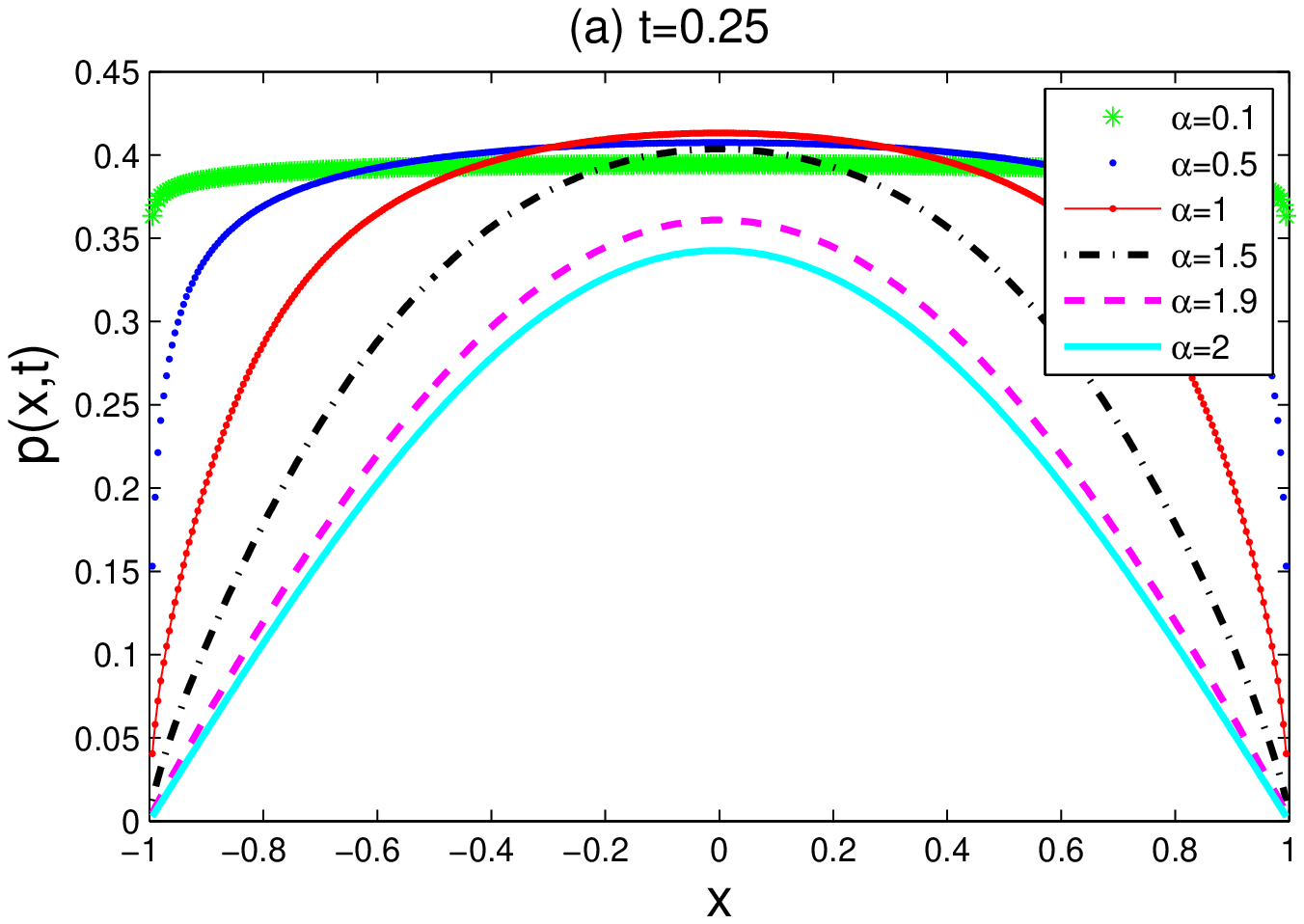}
\includegraphics*[width=7.9cm]{./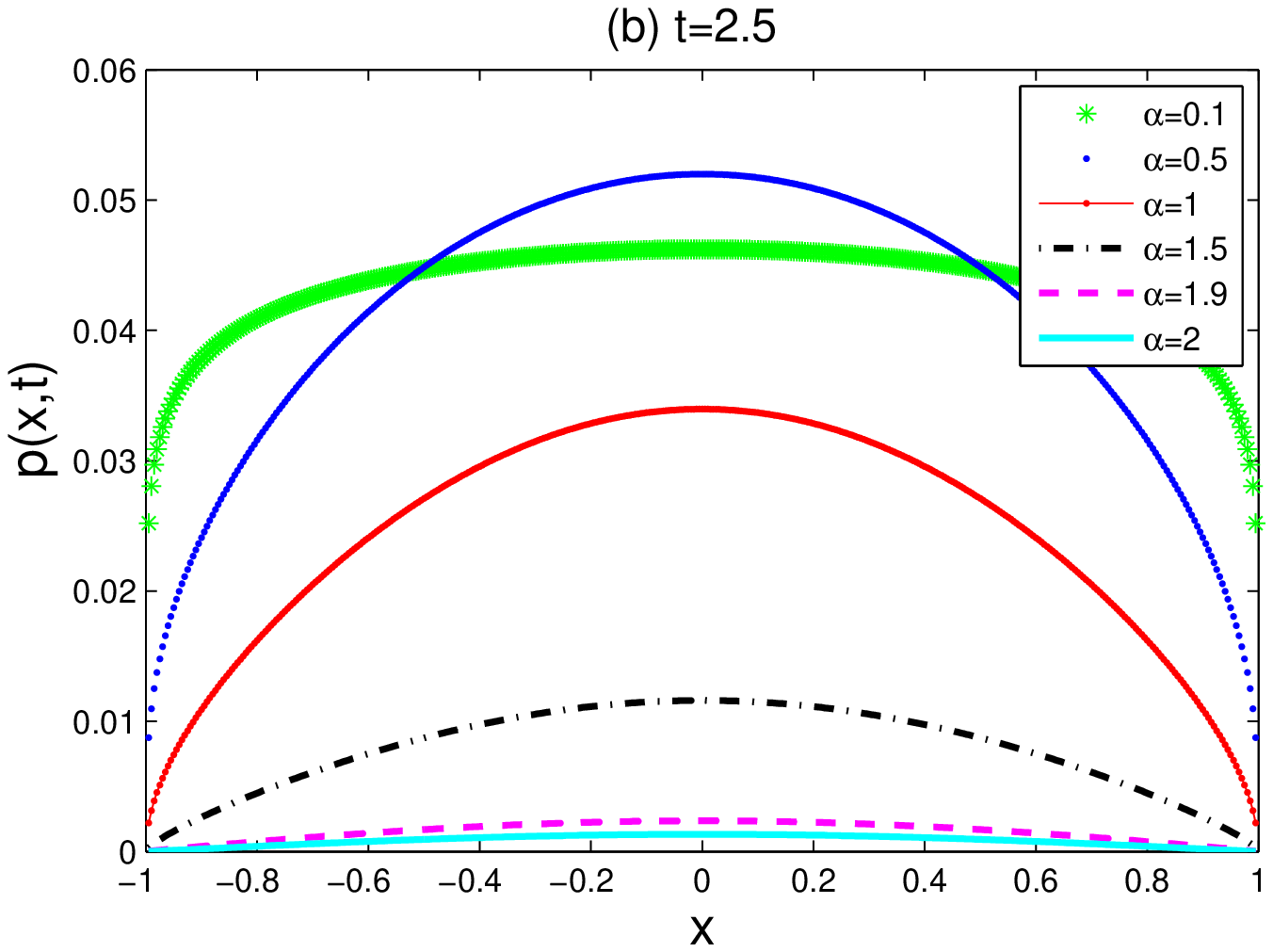}
\end{center}
\caption{The dependence of the probability density function $p$ on
the values of $\a$. The initial condition is the uniform profile
$p(x,0)= 0.5 I_{-1<x<1}$. The probability densities $p$ at time  (a)
$t=0.25$ and (b) $t=2.5$,   for $d=0,f\equiv 0$, $\eps =1$,
$h=0.001$, $\Delta t=0.5 h^\a$ and the absorbing condition.   } 
\label{fig.dau}
\end{figure}
On the other hand, when the initial condition is uniformly
distributed (i.e., the particle is equally likely starting from any
point in the interval $[-1,1]$), the results in Fig.~\ref{fig.dau}
show that the probability of finding the "particle" near the center
of the domain is a non-monotone function of $\a$, while the
probability near the boundary of the domain is a decreasing function
of $\a$ for all values of $\a$ in $(0,2]$. The profiles of the
probability function changes from the starting shape of a step
function to that of parabola-like for $\a \geq 1$. In contrast, for
small values of $\a$ (such as $\a=0.1$), the probability function
keeps its flat profile as time elapses. Again, the functions are
continuous at the boundary $x=\pm 1$ after a careful examination of
the solutions near the boundary.


\begin{figure}[h]
\begin{center}
\includegraphics*[width=12cm]{./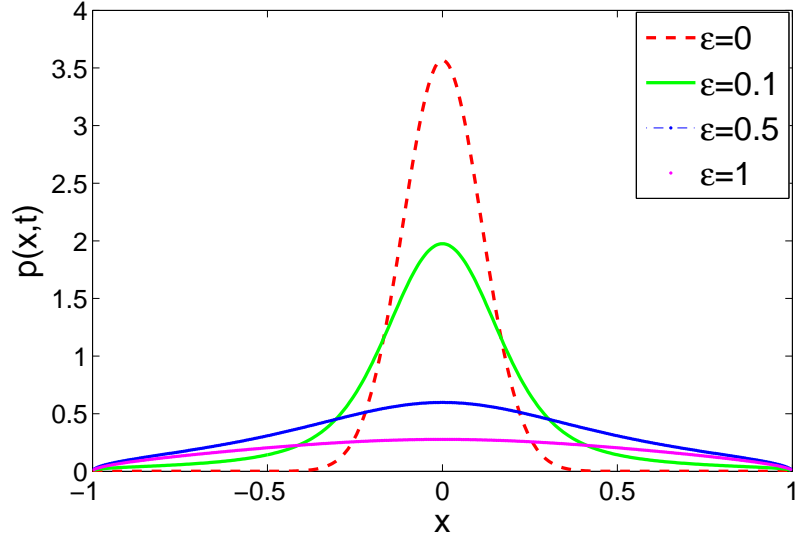}
\end{center}
\caption{Dependence of the probability density $p$ at $t=1$ on the magnitude
of L\'evy noises 
for the case of $f\equiv 0$, pure non-Gaussian noise ($d=0$), 
$\alpha=1$, $D=(-1,1)$ with the absorbing condition and
 different sizes of $\varepsilon=0, 0.1, 0.5, 1$. 
}{\label{eps}}
\end{figure}
Next, we investigate the effect of the magnitude of L\'evy noises. 
Figure~\ref{eps} shows the probability density function $p$
at time $t=1$ starting from the initial profile $\displaystyle p(x,0)=
\sqrt{\frac{40}{\pi}} e^{-\frac{x^2}{40}}$ for the case of
pure non-Gaussian noise $d=0$, $\alpha=1$, the domain $D=(-1,1)$
and no deterministic driving force $f\equiv 0$ but different magnitude
of $\varepsilon=0,0.1,0.5,1$. When the noise is absent $\varepsilon=0$,
the probability density keep the initial profile $p(x,t)=p(x,0)$. 
As the amount of noise $\varepsilon$ increases, the density function
becomes smaller at the center of the domain and the function profile becomes
flatter. Comparing with Figs.~\ref{time_sequence}(a) and \ref{fig.dag}(a), 
we find that increasing the magnitude of L\'evy noises 
has the similar effect as
increasing the time or the value of $\alpha$.

\subsubsection{Effect of the deterministic driving force due to
Ornstein-Uhlenbeck(O-U) potential: $f(x)=-x$}

\begin{figure}
\begin{center}
\includegraphics*[width=12cm]{./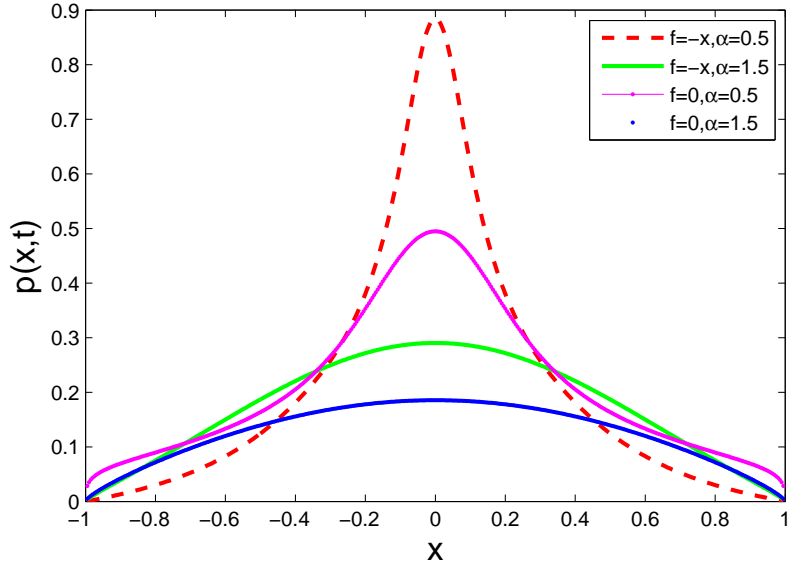}
\end{center}
\caption{The effect of the deterministic driving force $f$. The
plots show the probability density functions $p$ at time $t=1$ with
$d=0,\eps=1, \a=0.5, 1.5$ and the absorbing condition 
starting from a Gaussian profile when the
O-U potential $f(x)=-x$ is present and when $f=0$. } \label{f_alpha}
\end{figure}
Next, we examine the effect of a deterministic driving force, the
O-U potential $f(x)=-x$, on the probability density function $p$.
The initial condition is the Gaussian profile $\displaystyle p(x,0)=
\sqrt{\frac{40}{\pi}} e^{-\frac{x^2}{40}}$ as in Fig.~\ref{fig.dag}.
We keep other parameters the same as those in Fig.~\ref{fig.dag}:
$d=0$ and $\eps=1$. The O-U potential drives the "particle" toward
the origin ($x=0$), the center of the domain, while the stochastic
term in the SDE~\eqref{sde999} acts like a "diffusional" force.
Figure~\ref{f_alpha} gives us the result of the competition between
the two forces. At the smaller value of $\a=0.5$, we compare the
density functions $p$ at time $t=1$ with the driving force $f(x)=-x$
and without any deterministic force $f\equiv 0$. As expected, we
find that the probability of finding the particle near the origin or
the boundary is higher or lower respectively when the O-U potential
is present. At the larger value of $\a=1.5$, the probability near
the center is higher when the O-U potential is present, while the
probabilities are almost identical near the boundary whether
$f(x)=-x$ or $f=0$.

\begin{figure}
\begin{center}
\includegraphics*[width=7.9cm]{./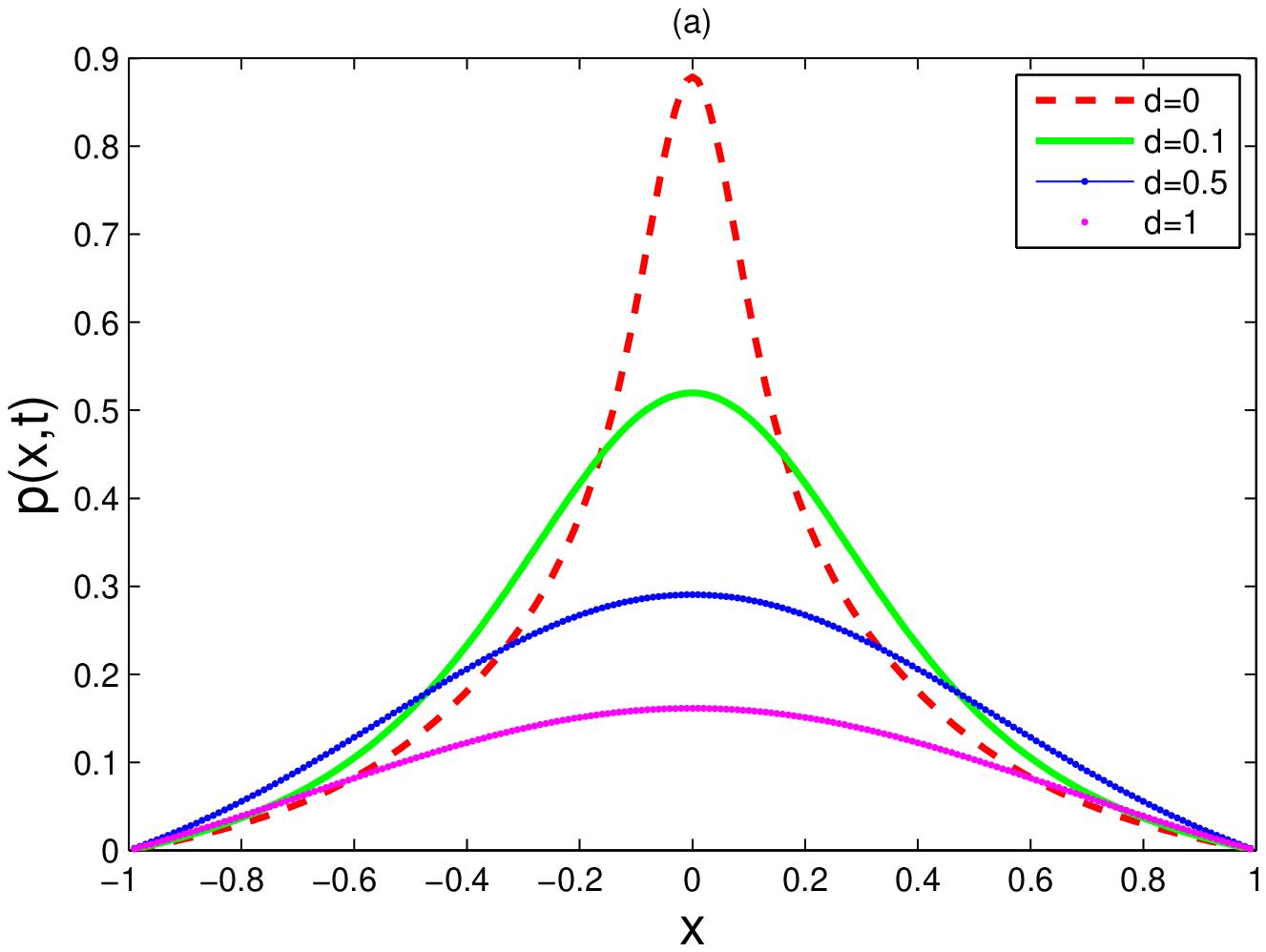}
\includegraphics*[width=7.9cm]{./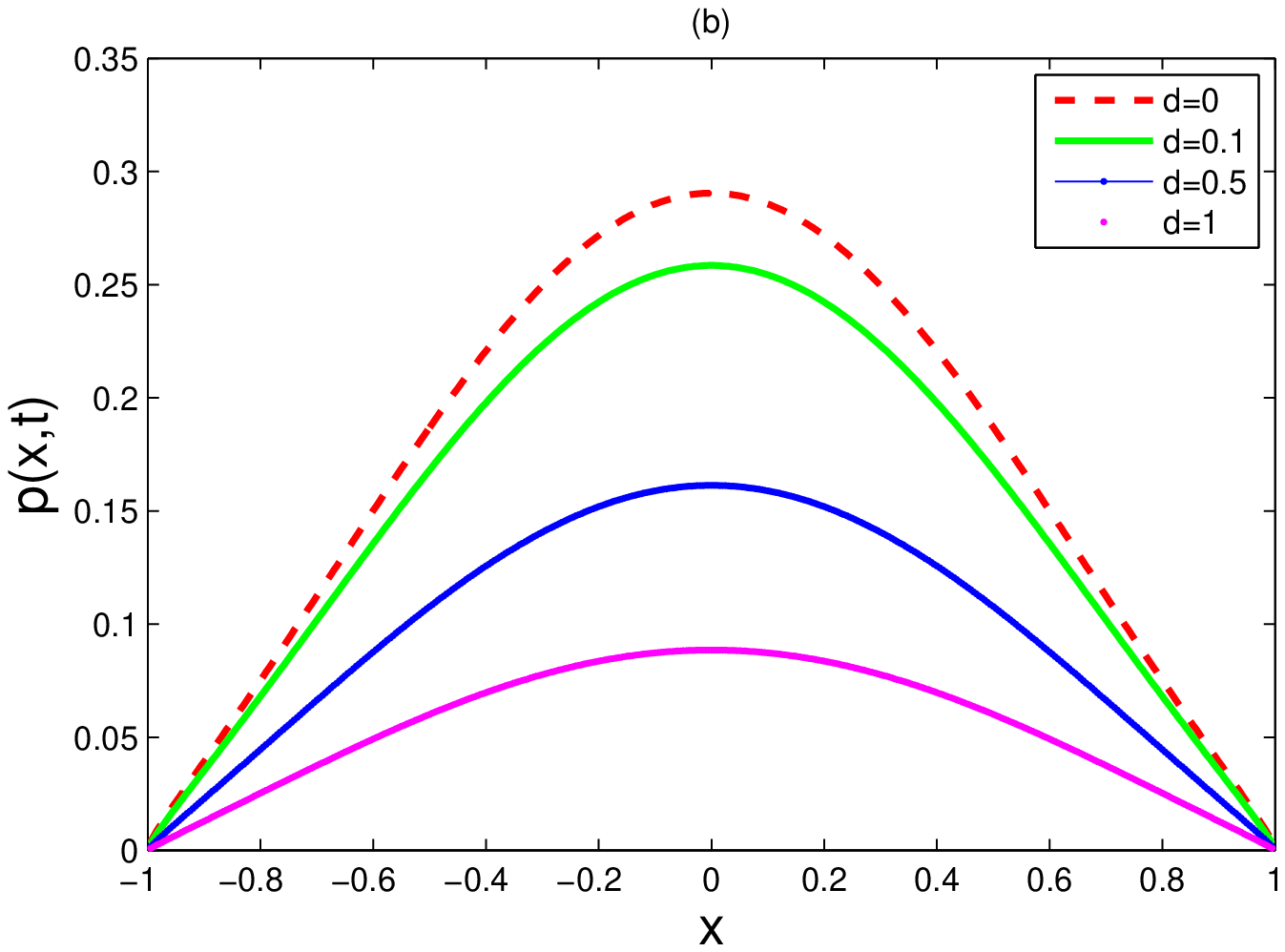}
\end{center}
\caption{The effect of adding Gaussian noise($d\ne 0$). The
probability density function $p$ at time $t=1$ is shown for
$f(x)=-x$, $\eps=1$, the absorbing condition
 and different values of $d$, 
starting from $\displaystyle p(x,0)=
\sqrt{\frac{40}{\pi}} e^{-\frac{x^2}{40}}$: (a)  $\a=0.5$ and (b)
$\a=1.5$. }{\label{O_U}}
\end{figure}
As expected, adding the Gaussian noise $d \neq 0$ would decrease the
probability density function. Figure~\ref{O_U} shows the density
functions $p$ at time $t=1$ with the O-U potential $f(x)=-x$ and
$\eps=1,\a=0.5, 1.5$ for different amount of Gaussian noises
$d=0,0.1,0.5,1$. For the smaller value of $\a=0.5$, shown in
Fig.~\ref{O_U}(a), the density function profiles change from a
bell-like shape to a parabola-like one as the Gaussian noise $d$
increases. However, for the larger value $\a=1.5$, the density
profiles at $t=1$ are parabola-like regardless the value of $d$. As
we have observed earlier, the L\'evy noise with larger values of
$\a$ has similar diffusive effect as the Gaussian noise on the
probability density function.



\subsubsection{Dependence on the size of the domain}

\begin{figure}[h]
\begin{center}
\includegraphics*[width=9.0cm]{./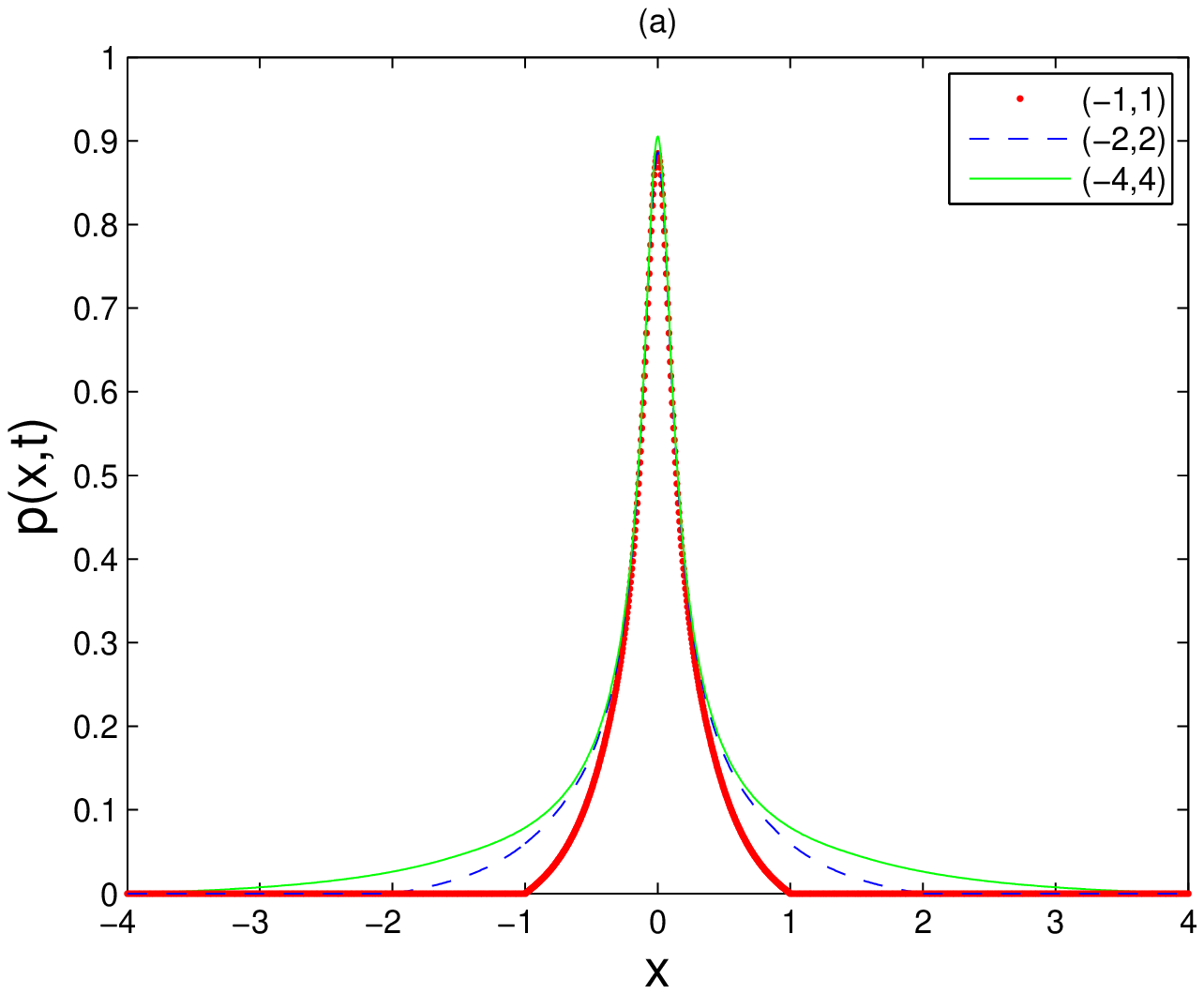}
\includegraphics*[width=9.0cm]{./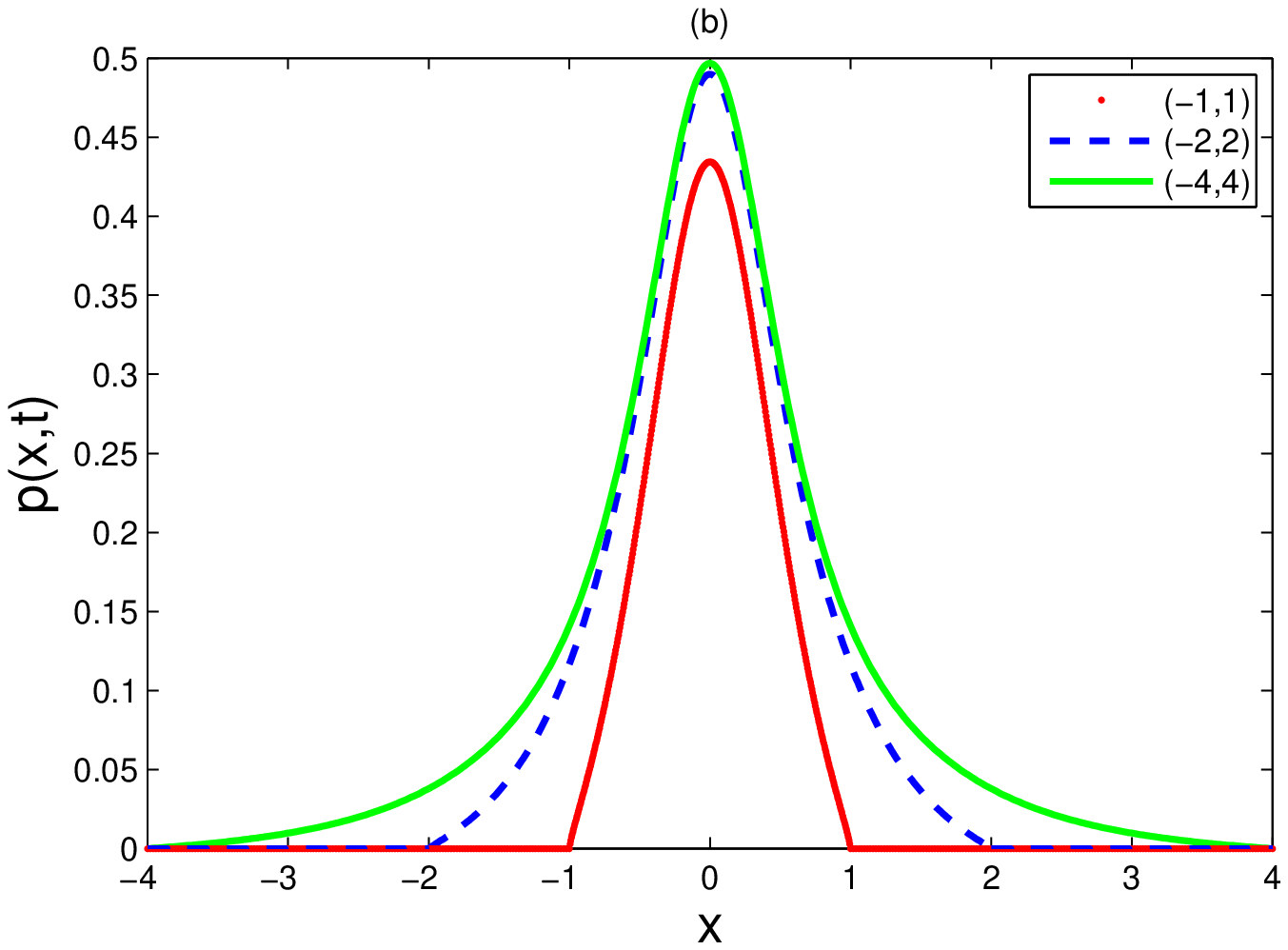}
\includegraphics*[width=9.0cm]{./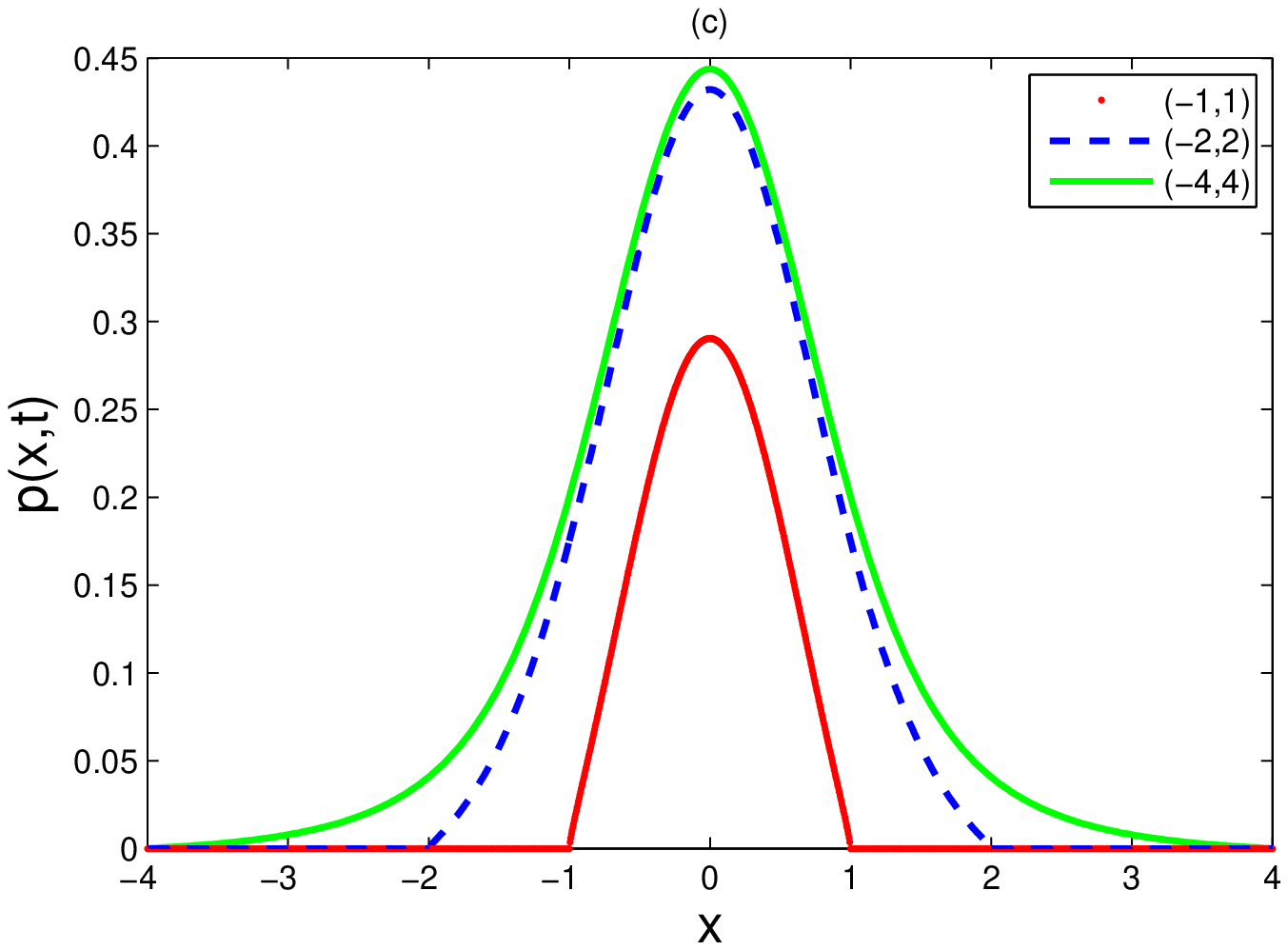}
\end{center}
\caption{Dependence of the probability density at $t=1$ on the size
of domain $D$ for the case of Ornstein-Uhlenbeck potential $f(x)=-x$
with pure non-Gaussian noise ($d=0$), $\varepsilon=1$, the absorbing
condition and different domain sizes $(-1,1)$, $(-2,2)$ and $(-4,4)$:
 (a)  $\a=0.5$; (b)  $\a=1$;  and (c) $\a=1.5$. }
\label{domain_size}
\end{figure}
In Fig.~\ref{domain_size}, we plot the probability densities at $t=1$ for three 
different sizes of domains $(-1,1)$, $(-2,2)$ and $(-4,4)$ and three different
values of $\a=0.5,1$ and $1.5$. Here, the O-U potential $f(x)=-x$
is present but there is no Gaussian noise $d=0$, and the initial 
profile of the probability density is the same Gaussian one as before.
At a fixed value $x$, 
the density function has higher value as the domain size increases due 
to the fact that it is harder to escape a larger domain. 
For the smaller value of $\a=0.5$, Fig.~\ref{domain_size}(a) shows
that the density function values are relatively insensitive 
to the domain sizes in the regions near the center, 
the region $(-0.5,0.5)$ in this case.
It suggests that, for small values of $\a$, 
the "particle" escape the bounded domains mostly via large jumps rather 
than small movement. For the two other values of $\a=1$ and $1.5$,
the density function is smaller for the smaller domain $D=(-1,1)$,
but the probability functions have similar values near the center 
for the two larger domains $(-2,2)$ and $(-4,4)$. The results shown 
in Fig.~\ref{domain_size} imply that, 
for each value of $\a$, there is a threshold for the size of
the domain beyond which the density functions will have similar values 
in the regions centered at the origin.

\subsection{Natural condition}

The distribution and the properties of
$\alpha-$stable random variables are well known. However, the time evolution
of the probability density of symmetric $\alpha-$stable L\'evy process is
less well studied.
We approximate the solution to the Fokker-Planck equation \eqref{FPE555} 
subject to
the natural far-field condition \eqref{eq.nbc} by using the scheme \eqref{eq.sdn} with increasing domain sizes until
the PDF $p$ converges on the intervals of our interests. 

To evaluate the error caused by replacing the infinity domain $(-\infty,
\infty)$ for the natural condition with finite domains,
we compute the integral 
$\displaystyle I_p(t)\equiv\int_{-\infty}^\infty p(x,t) {\rm d}x$.
For the natural far-field condition, the integral $I_p(t)$ keeps 
its initial value of one for all times. 
For the case of $f\equiv 0, d=0$, $\varepsilon=1$,
$\a=1$, we compute the integral $I_p(1)$ numerically using trapezoidal rule 
based on the numerical solutions with 
increasing domain sizes $(a,b)=(-5,5), (-10,10),
(-20,20)$, $(-40,40)$, $(-80,80)$.
The corresponding values of the integral 
for the different domain sizes are $0.997060069$, $0.999318$,
$0.999838$, $0.999961$, $0.999990$ respectively. It is obvious that
the integrals for these large domains are close to one,
implying that the truncating the original infinite domain 
cause small amount of error. 
The integral is closer to one as the domain size becomes
larger. The convergence order of the integral with respect to
the domain size is about $2$.


\begin{figure}[h]
\begin{center}
\includegraphics*[width=8.0cm]{./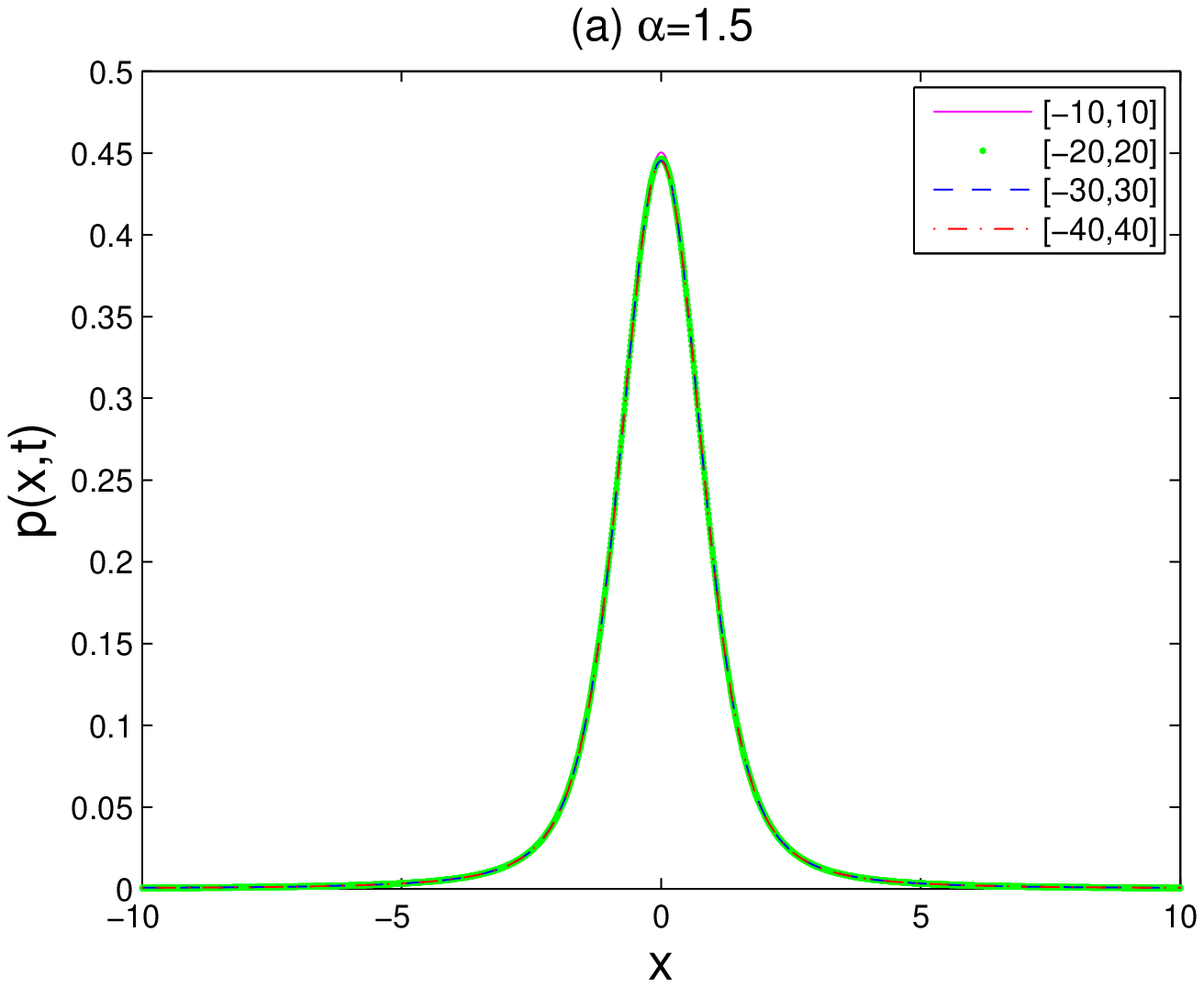}
\includegraphics*[width=8.0cm]{./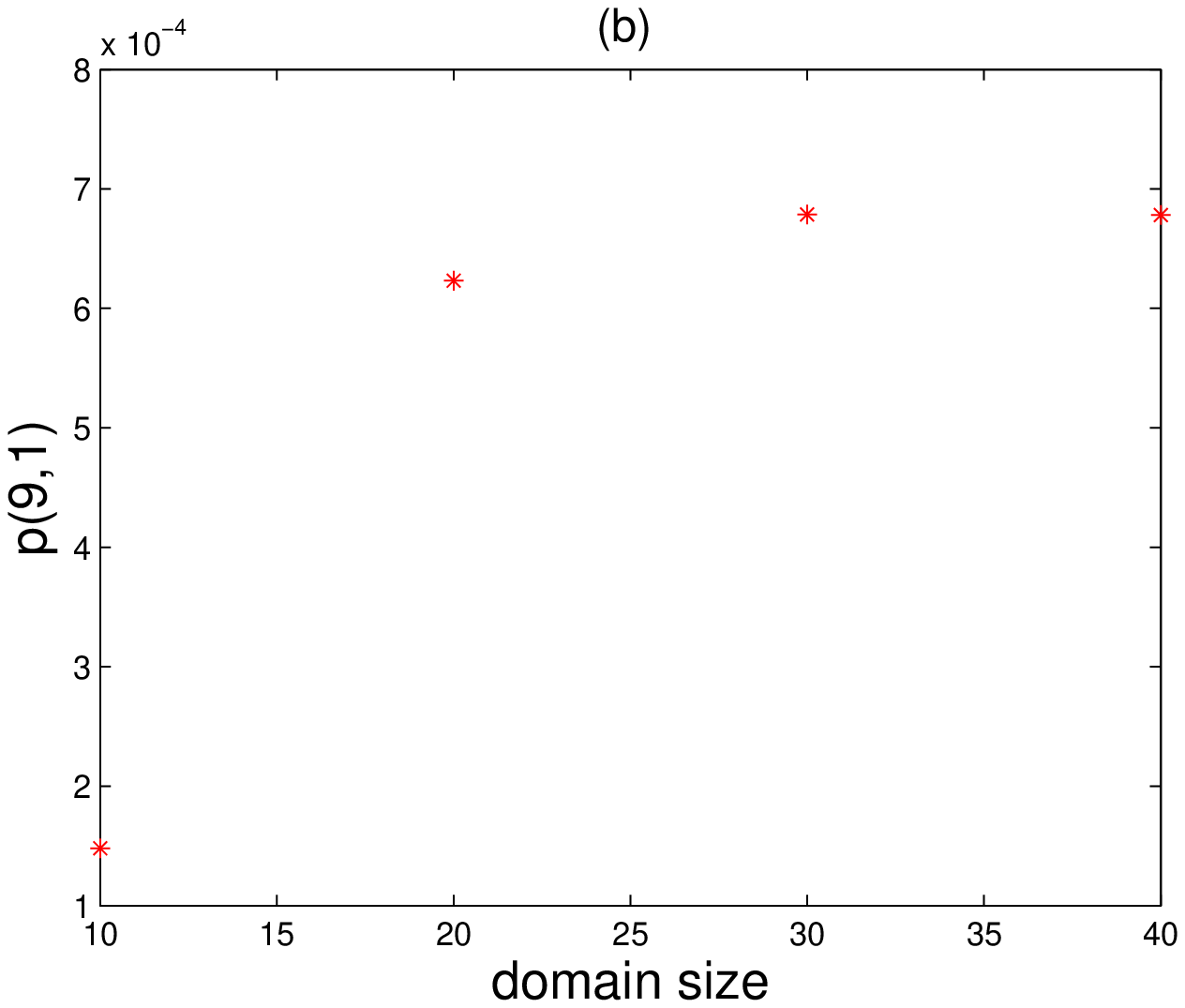}
\includegraphics*[width=8.0cm]{./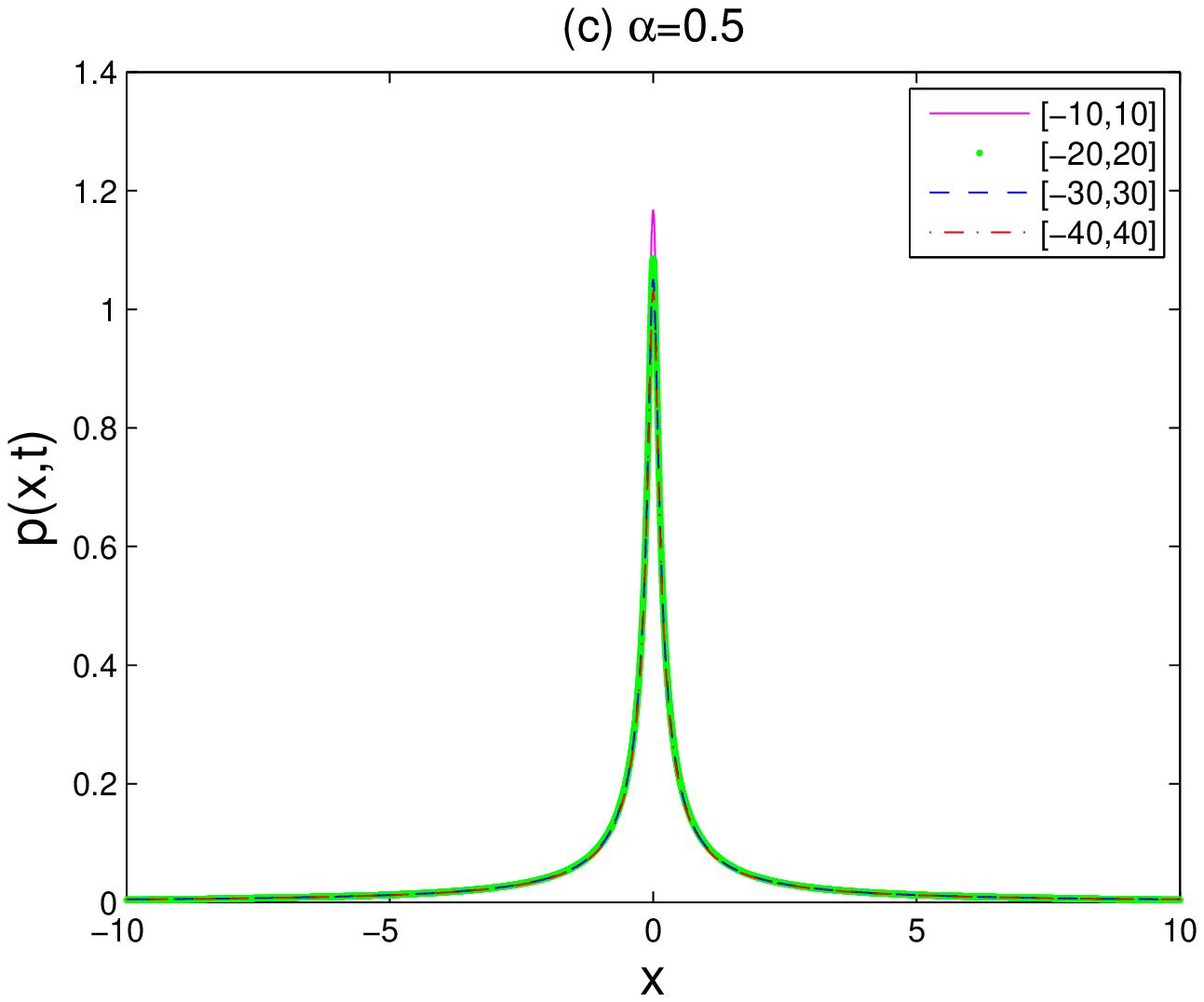}
\includegraphics*[width=8.0cm]{./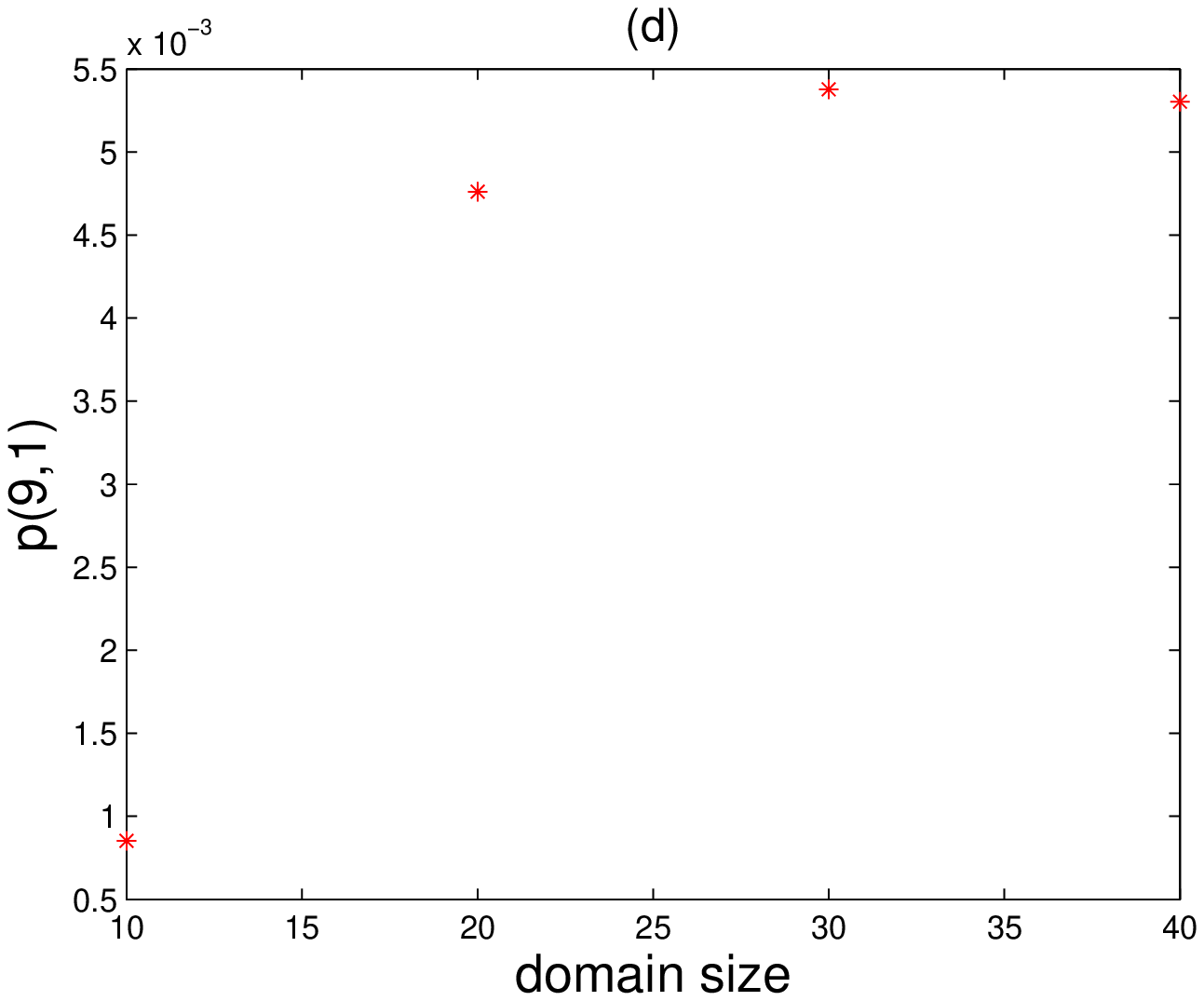}
\end{center}
\caption{The probability density function $p$ at time $t=1$ 
with the natural condition, 
$f(x)=-x$, $\eps=1$, $\alpha=1$, and the initial Gaussian profile.
(a) The numerical solutions for $\alpha=1.5$
from different computational domains
$[-10,10], [-20,20],[-30,30],[-40,40]$.
(b) The density function values at the fixed point $(x,t)=(9,1)$
plotted against the computational domain size $(-L,L)$ for $\alpha=1.5$.
(c) Same as (a) except for $\alpha=0.5$. 
(d) Same as (b) except for $\alpha=0.5$. } 
\label{fig.nb}
\end{figure}
Starting with the same initial Gaussian profile 
$p(x,0)= \sqrt{\frac{40}{\pi}} e^{-\frac{x^2}{40}}$, $d=0$,
$f(x)=-x$, $\eps=1$ and $\a=1.5$, Fig.~\ref{fig.nb}(a) shows the PDF
$p$ on the interval $(-10,10)$ at time $t=1$ using increasing 
computational domain sizes 
$(-L,L)=(-10,10), (-20,20)$, $(-30,30)$, $(-40,40)$. Note that
the integro-differential equation \eqref{FPE555} for the natural
condition is different than Eq.~\eqref{fpe1Dn3} 
for the absorbing condition. 
Consequently, the corresponding numerical schemes 
are also different: \eqref{eq.sdn} for the natural far-field condition
and \eqref{nm1D3} for the absorbing condition. 
The numerical results show that the four
different approximations fall onto each other, indicating that the
probability $p$ becomes independent of domain size when the domain
is large enough. In Fig.~\ref{fig.nb}(b), we demonstrate the
convergence of the value of the probability $p$ at the point
$(x,t)=(9,1)$. In this case, the values of $p$ on the interval
$(-10,10)$ do not change significantly when the computational
domain $(-L,L)$ includes $(-20,20)$. Consequently, on the finite interval
$(-10,10)$, we can argue that the profile of $p$ obtained by using the
computational domain larger than $(-20,20)$ shall agree well 
with that for natural far-field condition $(-\infty,\infty)$. 
Comparing Fig.~\ref{fig.nb}(a) with Fig.~\ref{domain_size}(c),
 we point out that the values of the density function
for the natural condition near the center are slightly larger 
than those for the absorbing condition, when the other conditions match with
each other.  

Figure~\ref{fig.nb}(c) and (d) show the PDF for $\alpha=0.5$, keeping
the other conditions the same. Again, the numerical solution converges
for large enough computational domain. However, with other conditions
being identical, the values of the PDF for natural condition
near the center, shown in Fig.\ref{fig.nb}(c), are significantly 
higher than those for the absorbing condition 
shown in Fig.~\ref{domain_size}(a).   

\begin{figure}[h]
\begin{center}
\includegraphics*[width=16.8cm]{./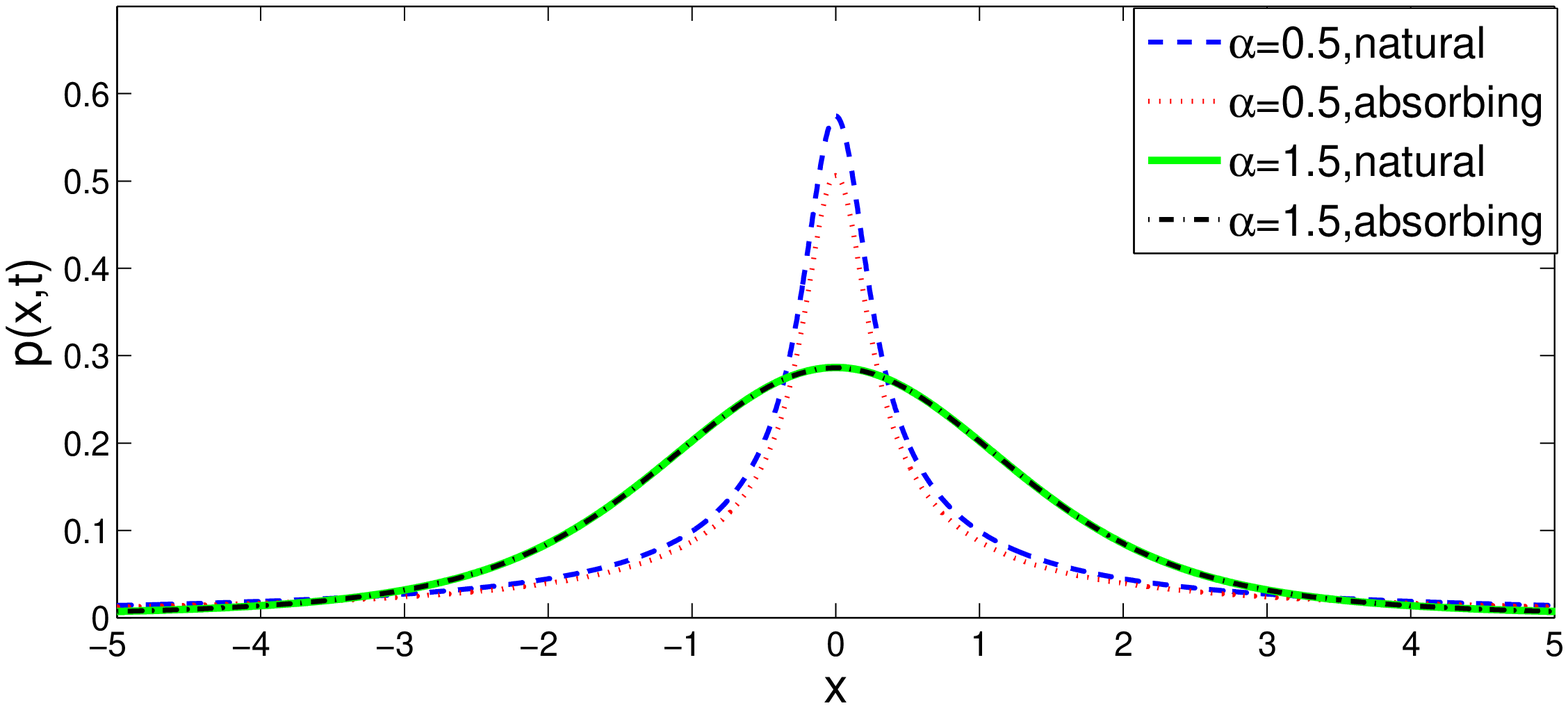}
\end{center}
\caption{Comparison of different auxiliary conditions. 
The probability density functions at time $t=1$ starting with 
a Gaussian initial condition for $d=0$, $f\equiv 0$, $\eps=1$
and $\alpha=0.5$ and $\alpha=1.5$. }{\label{CompBC}}
\end{figure}
The conclusions from comparing the two auxiliary conditions are the same
when we remove the deterministic driving force $f$. Figure~\ref{CompBC}
shows the PDFs for both auxiliary conditions on the same graph.






\subsubsection{Deterministic force driven by the double-well potential: 
$f(x)=x-x^3$}
\begin{figure}[h]
\begin{center}
\includegraphics*[width=14.7cm]{./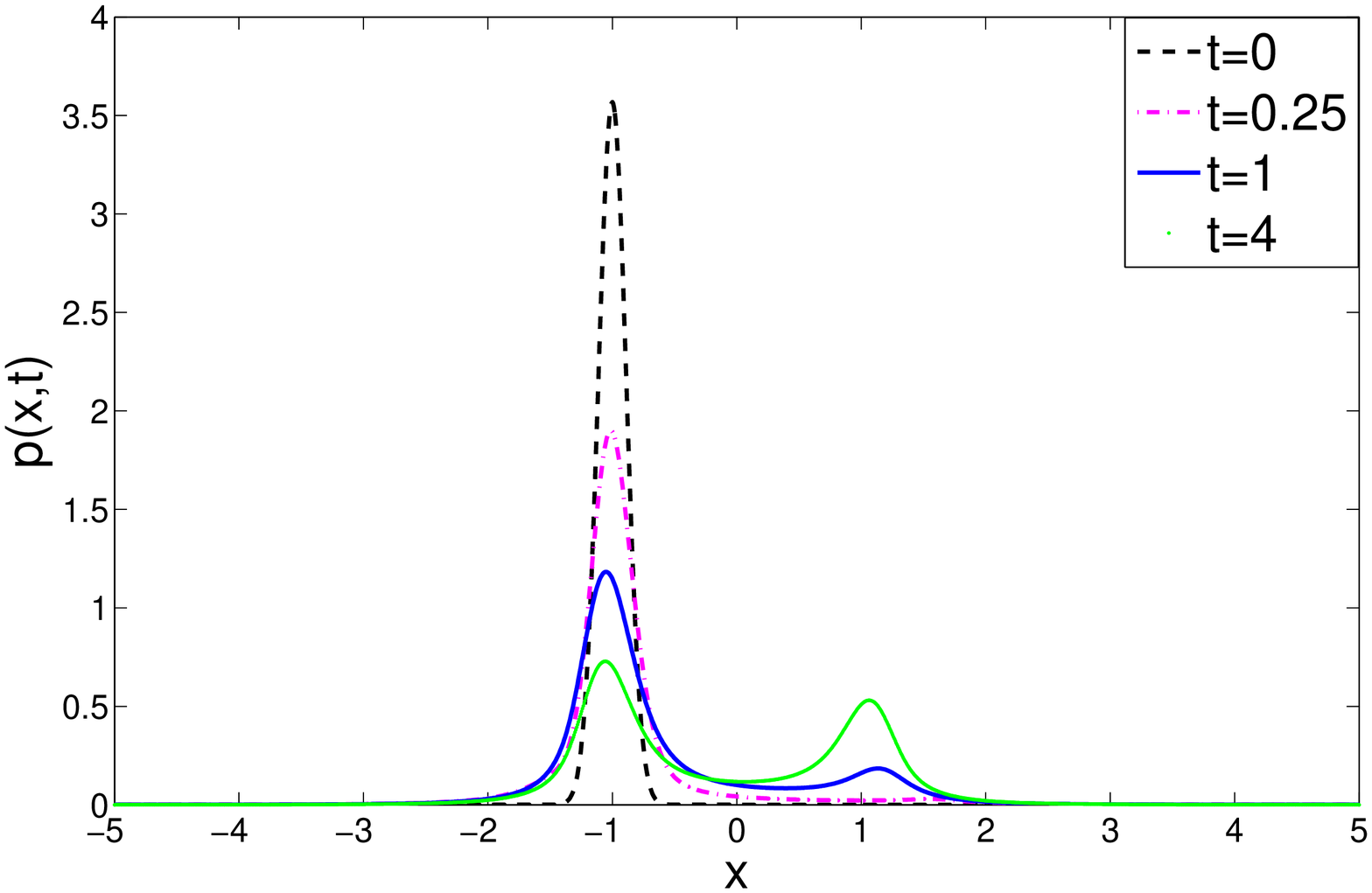}
\includegraphics*[width=14.7cm]{./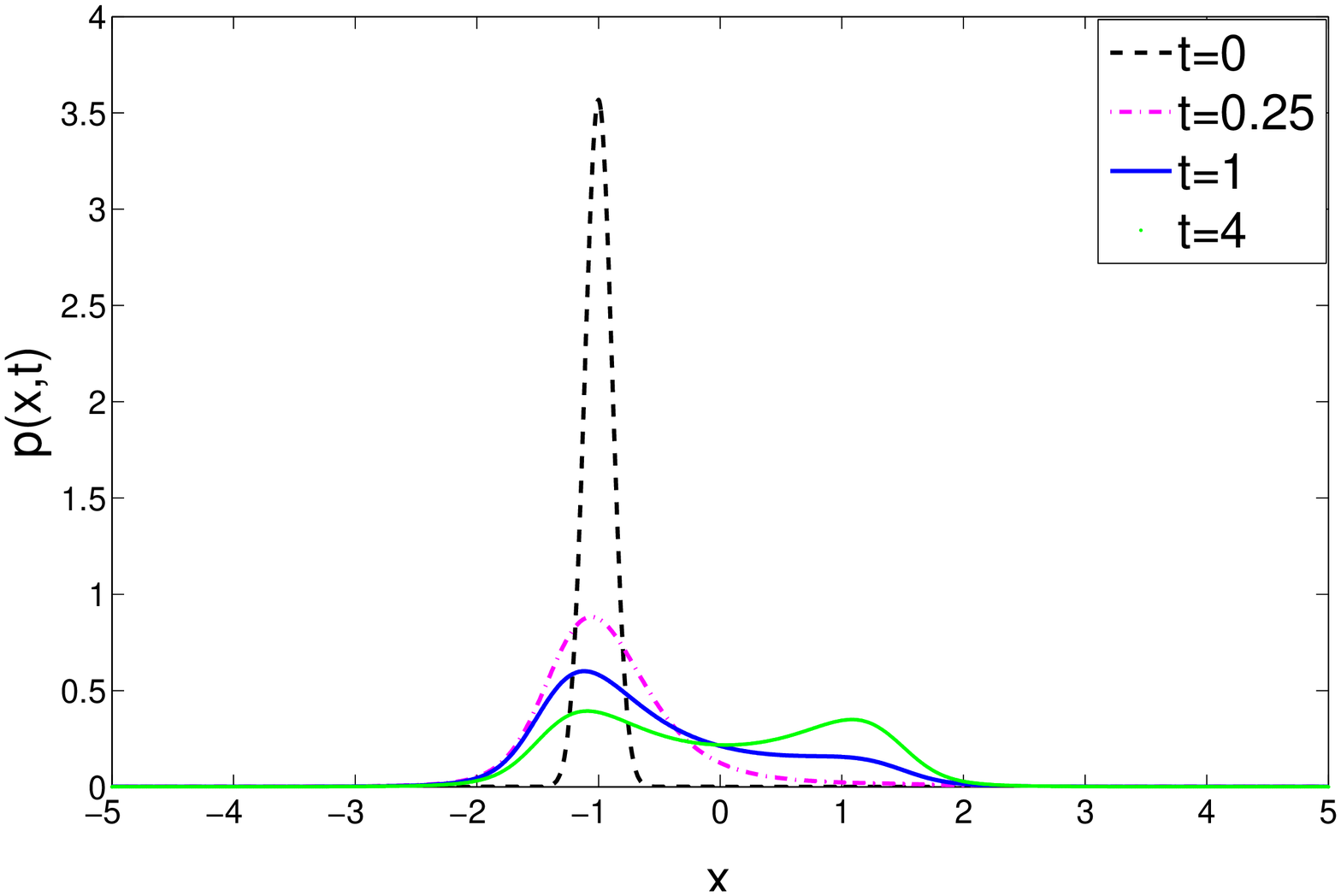}
\end{center}
\caption{The probability density functions for deterministic driving force 
$f(x)=x-x^3$, $d=0.1$, $\eps=1$ and $\alpha=0.5$(top), $\a=1.5$(bottom).}  
\label{Dwell}
\end{figure}
The deterministic force governed by the double-well potential, $f(x)=x-x^3$, 
drives the 'particle' toward the two stable states $x=\pm 1$. 
The potential has a local maximum located at the center $x=0$
and two local minima symmetrically located at $x=\pm 1$.
In the absence of noises, the 'particle' would be driven to one
of the stable steady states $x=\pm 1$ and stays there. 
Under the natural condition, we compute
the probability density starting with a Gaussian profile 
centered at one of the stable state $x=-1$, as shown 
in Fig.~\ref{Dwell}.  
From the numerical results, we observe that, although there is no particle
concentrated around the stable position $x=1$ at the beginning, 
as time goes by, the probability density functions show two peaks 
centered at the two stable points $x=\pm1$ respectively. It suggests
 the particles tend to stay at one of two stable
points $x=\pm1$ with nearly equal chance. At the same time, 
the peaks for $\a=0.5$ are more pronounced than those for $\a=1.5$. On the
other hand, the peak at $x=1$ for $\a=1.5$ approaches 
the same height as the original peak at $x=-1$ 
faster than the case of $\a=0.5$.



\section{Conclusion}
\label{sec.cl}

The Fokker-Planck equation  for a stochastic   system  with Brownian
motion (a Gaussian process) is a local differential equations, while
that for a stochastic   system with a $\alpha-$stable L\'evy motion
(a non-Gaussian process) is a nonlocal differential equation.

By exploring the Toeplitz matrix structure of the discretization, 
we have developed a fast and accurate numerical algorithm  to solve the
nonlocal Fokker-Planck equations, under either absorbing or natural
conditions. We have found the conditions under which the schemes satisfy
a maximum principle, and consequently, have shown their stability
and the convergence based on the maximum principle. 

The algorithm is tested and validated by comparing with an exact solution
and well-known numerical solutions for the probability density functions
of $\a$-stable random variables. It is applied to two stochastic 
dynamical systems (i.e., the Ornstein-Uhlenbeck system and double-well system) 
with non-Gaussian L\'evy noises. We find that the $\a$-stable process has
smoothing effect just as the standard diffusion: an initially non-smooth 
probability density function instantly becomes smooth for time $t>0$. 
The value of $\a$ has significant effect on the profiles of the density
functions.


\section*{Acknowledgments}
We would like to thank Zhen-Qing Chen,
Cyril Imbert, Renming Song and  Jiang-Lun Wu for helpful
discussions.
This work was partly supported by the NSF Grants 0923111 and 1025422,   and the NSFC grants 10971225 and 11028102.

\bibliographystyle{siam}      
\bibliography{stochas}


\end{document}